\documentclass[11pt]{article}

\usepackage{amsmath}
\usepackage{amssymb}
\usepackage{amsfonts}
\usepackage{amscd}
\usepackage{mathtools}
\usepackage{diffcoeff}
\usepackage{amsthm}
\usepackage{latexsym}
\usepackage{graphicx}
\usepackage{bm}
\usepackage{caption}
\usepackage{hyperref}
\usepackage[all]{xy}
\usepackage[top=20mm,bottom=30mm,left=25mm,right=25mm]{geometry}
\usepackage{amsmath}
\usepackage{amssymb}
\usepackage{amsfonts}
\usepackage{amscd}
\usepackage{mathtools}
\usepackage{diffcoeff}
\usepackage{amsthm}
\usepackage{latexsym}
\usepackage{bm}
\usepackage{caption}
\usepackage[all]{xy}

\AtBeginDocument{%
}

\theoremstyle{plain}

\theoremstyle{definition}
\newtheorem{Def}{Definition}[section]
\newtheorem{Lem}[Def]{Lemma}
\newtheorem{Fact}[Def]{Fact}
\newtheorem{Prop}[Def]{Proposition}
\newtheorem{Thm}[Def]{Theorem}

\newtheorem{Cor}[Def]{Corollary}

\title{A Geometric Interpretation of Generalized Hurwitz--Radon Numbers Defined by Kannaka--Tojo}
\author{Muneto Miyaji}
\date{} 

\begin{document}
\maketitle

\begin{abstract}
The Hurwitz--Radon number originates in the composition problem of quadratic forms and is related to the maximum number of pointwise linearly independent vector fields on spheres. 
Kannaka--Tojo [arXiv:2602.04544] reformulated the Hurwitz--Radon number in the setting of a real reductive Lie algebra $\mathfrak g$ and its faithful representation $\iota$, and introduced two natural numbers $\rho^{(1)}(\mathfrak g,\iota)$ and $\rho^{(2)}(\mathfrak g,\iota)$. For classical Lie algebras and their standard representations, these two numbers coincide except for a few cases.

In this paper, fixing a Lie group $G$ and a subspace $\mathfrak{s} $ of $ \mathfrak g = \operatorname{Lie}(G)$ , we define natural numbers $\rho_{G,\mathfrak{s}}(M,\sigma)$ and $\rho^{\pm}_{G,\mathfrak{s}}(M,\sigma,\nabla)$ for a $G$-manifold $(M,\sigma)$ equipped with an affine connection $\nabla$. These are defined in terms of fundamental vector fields on $M$. In a special case, we show that $\rho_{G,\mathfrak{s}}(M,\sigma)$ coincides with $\rho^{(2)}(\mathfrak g,\iota)$, and that $\rho^{-}_{G,\mathfrak{s}}(M,\sigma,\nabla)$ coincides with $\rho^{(1)}(\mathfrak g,\iota)$. Furthermore, we show that $\rho^{+}_{G,\mathfrak{s}}(M,\sigma,\nabla)$ is related to Clifford structures on $M$.

\end{abstract}

\tableofcontents

\section{Introduction}\phantomsection


The Hurwitz--Radon number $\rho(N)$, associated with each positive integer $N$, is a classical invariant arising from the composition problem of quadratic forms, originating in the work of Hurwitz \cite{Hurwitz22} and Radon \cite{Radon22}.

When a positive integer $N$ is expressed as
    $$
    N = 2^{4a+b}(2c+1)\quad (a, b, c \in \mathbb{N} ,0\leq b \leq 3),
    $$
    the Hurwitz--Radon number $\rho(N)$ is defined by
    $$
    \rho(N) = 8a + 2^b.
    $$

Adams \cite{Adams62} gave a geometric characterization of the Hurwitz--Radon number by showing that $\rho(N)-1$ coincides with the maximum number of pointwise linearly independent vector fields on the sphere $S^{N-1}$.
Furthermore, the Hurwitz--Radon number is known to appear in various contexts, such as the invertibility problem for linear combinations of  real matrices \cite{AuYik71} and the study of Clifford--Klein forms by Kobayashi--Yoshino \cite{KobayashiYoshino05}.


Although the Hurwitz--Radon number is defined algebraically for positive integers, it has recently been generalized by Kannaka--Tojo \cite{KannakaTojo26} in the framework of real reductive Lie algebras and their representations.

Let $(\mathfrak{g},\iota)$ be a pair of a real reductive Lie algebra $\mathfrak g$ and a faithful representation $\iota:\mathfrak g\to\mathfrak{gl}(N,\mathbb C).$ Assume that $\iota(\mathfrak g)$ is self-adjoint. Let $\theta$ be the associated involution of $\mathfrak g$, and let $\mathfrak g=\mathfrak k+\mathfrak p$ be the associated Cartan decomposition (see Section 2 for details).

\begin{Def}[Kannaka--Tojo  \cite{KannakaTojo26}]
    In the setting above, we define two integers $\rho^{(i)}(\mathfrak{g}, \iota )$ $(i = 1, 2)$ as the largest $n \in \mathbb{N}$ for which there exists an $\mathbb{R}$-linear map $f:\mathbb{R}^n\to \mathfrak{p}$ such that
    \begin{description}
\item[$\bullet$]$\rho^{(1)}(\mathfrak{g}, \iota)$ : $\iota(f(v))^2 = \| v \|^2I_N$ for any $v \in \mathbb{R}^n$,
\item[$\bullet$]$\rho^{(2)}(\mathfrak{g}, \iota)$ : $\iota(f(v))$ is invertible for any non-zero $v \in \mathbb{R}^n$.
 \end{description}
 Here, $\| v \|$ denotes the standard norm of $v \in \mathbb{R}^n$.
\end{Def}

Clearly,
$$
\rho^{(1)}(\mathfrak{g}, \iota) \leq \rho^{(2)}(\mathfrak{g}, \iota)
$$
holds.

For example, for the standard representation
$\iota:\mathfrak{so}(N,N)\to\mathfrak{gl}(2N,\mathbb{C})$,
one has
\[
\rho^{(1)}(\mathfrak{so}(N,N),\iota)
=
\rho^{(2)}(\mathfrak{so}(N,N),\iota)
=
\rho(N).
\]
Thus, $\rho^{(1)}(\mathfrak{g}, \iota)$ and $\rho^{(2)}(\mathfrak{g}, \iota)$ may be regarded as generalizations of the Hurwitz--Radon number.

For classical Lie algebras $\mathfrak{g}$ and their standard representations $\iota$, the values of $\rho^{(1)}(\mathfrak g,\iota)$ and $\rho^{(2)}(\mathfrak g,\iota)$ were completely determined by Kannaka--Tojo, and if $\mathfrak{g} \neq \mathfrak{sl}(2N+1, \mathbb{D})$ $(\mathbb{D} = \mathbb{R}, \mathbb{C}, \mathbb{H}, N \geq 1)$, then
$$
\rho^{(1)}(\mathfrak{g},\iota)=\rho^{(2)}(\mathfrak{g},\iota)
$$
holds.

Moreover, for certain symmetric spaces $G/H$ and representations
$\iota:\mathfrak{g}\to\mathfrak{gl}(N,\mathbb{C})$,
Kannaka--Tojo showed that $\rho^{(1)}(\mathfrak g,\iota)$ determines the largest $n$
for which $\mathrm{Spin}(n,1)$ can act properly on $G/H$.
Details are given in Section 2.



The Hurwitz--Radon number is geometrically characterized by vector fields on spheres. Furthermore, Kannaka--Tojo introduced generalized Hurwitz--Radon numbers in the representation-theoretic setting, which naturally raises the following question:

\medskip
\noindent
\textbf{Problem.}
Can $\rho^{(1)}(\mathfrak g,\iota)$ and $\rho^{(2)}(\mathfrak g,\iota)$
be reformulated in a more geometric framework?


In this paper, we reformulate the generalized Hurwitz--Radon numbers introduced by Kannaka--Tojo
in the setting of $G$-manifolds.

\begin{Def}
Let $G$ be a Lie group, $M$ a $G$-manifold with action $\sigma$, and let
$\mathfrak{s}\subset\mathfrak g$ be a vector subspace.
We define
\[
\rho_{G,\mathfrak{s}}(M,\sigma)
\coloneqq
\max\left\{
n\in\mathbb N \,\middle|\,
\begin{array}{l}
\text{there exist } n \textrm{ } (G, \mathfrak{s})\text{-fundamental vector fields on }M\\
\text{which are linearly independent at every point}
\end{array}
\right\}.
\]

Let $\nabla$ be an affine connection on $M$, and define
\[
\nabla^{M}\mathfrak{s}
\coloneqq
\{\nabla X_A \mid A\in \mathfrak{s}\}
\subset \operatorname{End}_{C^\infty(M)}(\mathfrak X(M)),
\]
where $X_A$ denotes the fundamental vector field on $M$ corresponding to $A\in\mathfrak g$.
For each $\epsilon\in\{\pm1\}$, we define an integer
$\rho^\epsilon_{G,\mathfrak{s}}(M,\sigma,\nabla)$
as the largest $n\in\mathbb N$ for which there exists an $\mathbb R$-algebra homomorphism
$f:Cl_n^\epsilon\to \operatorname{End}_{C^\infty(M)}(\mathfrak X(M))$ 
such that
\begin{itemize}
\item $f(e_i)\in\nabla^M \mathfrak{s}$ for any $i \in \{1,\dots,n\}$.
\end{itemize}
Here, the notation $+$ (resp. $-$) corresponds to $\epsilon=1$ (resp. $\epsilon=-1$), and
$Cl_n^+=Cl_n$, $Cl_n^-=Cl_{0,n}$.
\end{Def}

Our first result shows that, in a special setting, these invariants coincide with the generalized Hurwitz--Radon numbers of Kannaka--Tojo.

\begin{Thm}\label{t13}
Let $G$ be a real reductive linear Lie group, let
$\varphi:G\to GL(N,\mathbb C)$ be a Lie group homomorphism, and set
$\iota\coloneqq(d\varphi)_e$.
Let
$\mathfrak g=\mathfrak k+\mathfrak p$
be a Cartan decomposition. Then
\[
\rho_{G,\mathfrak p}(\mathbb C^N\backslash\{0\},\varphi)
=
\rho^{(2)}(\mathfrak g,\iota).
\]

Moreover, let $\nabla$ be the standard flat affine connection on
$\mathbb C^N\backslash\{0\}$. Then
\[
\rho^-_{G,\mathfrak p}(\mathbb C^N\backslash\{0\},\varphi,\nabla)
=
\rho^{(1)}(\mathfrak g,\iota).
\]
\end{Thm}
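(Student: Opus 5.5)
The plan is to make everything concrete on $M=\mathbb C^N\backslash\{0\}$, regarded as a real manifold $\mathbb R^{2N}\backslash\{0\}$. For $A\in\mathfrak g$, the fundamental vector field of the linear action $\varphi$ is
\[
(X_A)_z=\frac{d}{dt}\Big|_{t=0}\varphi(\exp(\pm tA))z=\pm\iota(A)z,
\]
where the sign depends on the convention; it is harmless because $\mathfrak p$ is a linear subspace. So $X_A$ is the linear vector field $z\mapsto\iota(A)z$. Under the standard trivialization $T_zM\cong\mathbb C^N$, the flat connection gives $\nabla_Y X_A=\iota(A)Y$. Hence $\nabla X_A\in\operatorname{End}_{C^\infty(M)}(\mathfrak X(M))$ is pointwise multiplication by the constant real-linear map $\iota(A)$. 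The map $A\mapsto\nabla X_A$ is $\mathbb R$-linear, and it is injective because $\iota$ is faithful. I will interpret "$n$ $(G,\mathfrak p)$-fundamental vector fields" as $X_{A_1},\dots,X_{A_n}$ with $A_i\in\mathfrak p$.

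\textbf{First equality.} Given $f:\mathbb R^n\to\mathfrak p$ with $\iota(f(v))$ invertible for all $v\neq0$, set $A_i=f(e_i)$. Suppose $\sum c_iX_{A_i}(z)=0$ at some $z\neq0$ with $c\in\mathbb R^n$. Then $\iota(f(c))z=0$, which forces $c=0$. So the $X_{A_i}$ are pointwise linearly independent, and $\rho_{G,\mathfrak p}\ge\rho^{(2)}$.

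Conversely, suppose $X_{A_1},\dots,X_{A_n}$ are independent at every point, and define $f(v)=\sum v_iA_i$. If $\iota(f(v))$ were singular for some $v\ne0$, it would have a kernel vector $z\ne0$ (a complex kernel vector, which is in particular real), and then $\sum v_iX_{A_i}(z)=0$, a contradiction. Linearity of $f$ is automatic, so $\rho_{G,\mathfrak p}\le\rho^{(2)}$.

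The one point to handle carefully is that "linearly independent" means independent over $\mathbb R$ in $T_zM$. This is exactly what the argument above uses, so it is consistent.

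\textbf{Second equality.} I take $Cl_{0,n}$ to be generated by $e_i$ with $e_ie_j+e_je_i=2\delta_{ij}$, i.e.\ $e_i^2=+1$, since that matches $\iota(f(v))^2=\|v\|^2I$; I would check this against the paper's convention.

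Suppose $f:\mathbb R^n\to\mathfrak p$ satisfies $\iota(f(v))^2=\|v\|^2I_N$. Polarizing gives $\iota(A_i)\iota(A_j)+\iota(A_j)\iota(A_i)=2\delta_{ij}I$ for $A_i=f(e_i)$. Then $e_i\mapsto\nabla X_{A_i}$ satisfies the Clifford relations in $\operatorname{End}(\mathfrak X(M))$, because composition of these endomorphisms is pointwise matrix multiplication. By the universal property it extends to an algebra homomorphism, so $\rho^-\ge\rho^{(1)}$.

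Conversely, given a homomorphism $F$ with $F(e_i)=\nabla X_{A_i}$ and $A_i\in\mathfrak p$, the relations $F(e_i)F(e_j)+F(e_j)F(e_i)=2\delta_{ij}$ say that the constant matrices $\iota(A_i)$ satisfy the same relations; apply both sides to constant vector fields to see this. Set $f(v)=\sum v_iA_i$ and expand $\iota(f(v))^2$ to obtain $\|v\|^2I_N$. Hence $\rho^-\le\rho^{(1)}$.

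\textbf{Main obstacle.} The main obstacle is bookkeeping rather than depth. I need to verify the identification $\nabla X_A=\iota(A)$ as endomorphisms of real vector fields, including that the complex matrix acts real-linearly; this holds because $\iota(A)$ is $\mathbb C$-linear, hence $\mathbb R$-linear. I also need to confirm that the unit of $\operatorname{End}_{C^\infty(M)}(\mathfrak X(M))$ corresponds to $I_N$, and that the sign conventions for fundamental vector fields and for $Cl_{0,n}$ line up.
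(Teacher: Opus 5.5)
Your proposal is correct and takes essentially the paper's route: the first equality is the content of Lemmas~\ref{l39}--\ref{l311} (a real linear dependence $\sum t_iX_{A_i}(z)=0$ is exactly a nonzero kernel vector $z$ of $\iota(\sum t_iA_i)$), and the second is Lemmas~\ref{l48}--\ref{l412} (polarization, plus the fact that $\nabla X_A$ acts as the constant matrix $\iota(A)$, so the Clifford relations for the $\nabla X_{A_i}$ are equivalent to those for the $\iota(A_i)$). The only difference is cosmetic: you treat $\mathbb{C}^N\backslash\{0\}$ directly as a real vector space instead of passing to $\mathbb{R}^{2N}\backslash\{0\}$ via $\eta$ and $F$ and computing in coordinates, and your convention $e_i^2=+1$ in $Cl_{0,n}$ agrees with the paper's.
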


As a modest application, we obtain the following.

\begin{Thm}\label{t14}
Let $N\in\mathbb{N}$, let $\mathfrak{so}(N,N)=\mathfrak{k}+\mathfrak{p}$ be a Cartan decomposition, and let
$\varphi:SO(N,N)\to GL(2N,\mathbb{R})$
be the standard representation. Then the following holds:
\begin{align*}
\rho(N)
&=
\max\left\{
n\in\mathbb N \,\middle|\,
\begin{array}{l}
\text{there exist } n \textrm{ }(O(N), \mathfrak{o}(N)) \text{-fundamental vector fields on }\mathbb{R}^{N}\backslash\{0\}\\
\text{which are linearly independent at every point}
\end{array}
\right\} +1\\
&=
\max\left\{
n\in\mathbb N \,\middle|\,
\begin{array}{l}
\text{there exist } n \textrm{ }(SO(N, N), \mathfrak{p}) \text{-fundamental vector fields on }\mathbb{R}^{2N}\backslash\{0\}\\
\text{which are linearly independent at every point}
\end{array}
\right\}.
\end{align*}
\end{Thm}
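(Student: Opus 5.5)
Theorem~\ref{t14} follows from Theorem~\ref{t13} combined with the Kannaka--Tojo computation $\rho^{(2)}(\mathfrak{so}(N,N),\iota)=\rho(N)$ for the standard representation, plus a direct identification of the first expression. Concretely, we realise $\mathfrak{so}(N,N)$ with respect to the form $\operatorname{diag}(I_N,-I_N)$. Then
\[
\mathfrak k=\left\{\begin{pmatrix}A&0\\0&B\end{pmatrix}\right\},\qquad
\mathfrak p=\left\{\begin{pmatrix}0&X\\X^{T}&0\end{pmatrix}\,\middle|\,X\in M(N,\mathbb R)\right\}.
\]

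\textbf{Second equality.} Theorem~\ref{t13} is stated over $\mathbb C^{2N}\backslash\{0\}$, whereas here the manifold is $\mathbb R^{2N}\backslash\{0\}$, so we first redo its argument over $\mathbb R$. For $A\in\mathfrak p$ the fundamental vector field is $X_A(x)=\pm Ax$, up to the sign convention. A family $A_1,\dots,A_n\in\mathfrak p$ gives fields that are pointwise independent on $\mathbb R^{2N}\backslash\{0\}$ if and only if
\[
\sum_i c_iA_i x\neq 0\quad\text{for all }c\neq 0,\ x\neq 0,
\]
that is, if and only if every nontrivial real combination $\sum_i c_iA_i$ is invertible. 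Since these matrices are real, real invertibility and complex invertibility coincide, so this is exactly the defining condition of $\rho^{(2)}$ with $f(c)=\sum_i c_iA_i$, which is injective. Hence the second maximum equals $\rho^{(2)}(\mathfrak{so}(N,N),\iota)=\rho(N)$.

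\textbf{First equality.} We reduce to the same invertibility problem in $M(N,\mathbb R)$. The matrix $\begin{pmatrix}0&X\\X^T&0\end{pmatrix}$ is invertible if and only if $X$ is invertible. So the second maximum equals the maximal dimension of a linear subspace of $M(N,\mathbb R)$ whose nonzero elements are all invertible, and by the classical Adams / Au-Yeung result cited in the introduction this is $\rho(N)$. Now consider $(O(N),\mathfrak o(N))$-fundamental vector fields $x\mapsto Bx$ on $\mathbb R^N\backslash\{0\}$ with $B$ skew-symmetric. Given such $B_1,\dots,B_m$ that are pointwise independent, each $Bx$ is orthogonal to $x$, so adjoining the identity gives the subspace $\operatorname{span}(I,B_1,\dots,B_m)$. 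For any $(c_0,c)\neq 0$ and $x\neq 0$, orthogonality of $x$ and $\sum_i c_iB_ix$ shows that $c_0x+\sum_i c_iB_ix\neq 0$. Hence every nonzero element of this $(m+1)$-dimensional subspace is invertible, which gives $m+1\le\rho(N)$.

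For the reverse inequality, Hurwitz--Radon matrices give $\rho(N)-1$ skew-symmetric orthogonal matrices $B_i$ with $B_iB_j+B_jB_i=-2\delta_{ij}I$. Then
\[
\Bigl(\sum_i c_iB_i\Bigr)^{T}\Bigl(\sum_i c_iB_i\Bigr)=\|c\|^2 I,
\]
so $\sum_i c_iB_i$ is invertible for $c\neq 0$, and the fields $B_ix$ are pointwise independent. Hence the first maximum plus one equals $\rho(N)$.

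The main obstacle is the upper bound in the first equality. I intend to reduce it to the classical bound on subspaces of invertible matrices via the orthogonality trick above, and to supply the lower bound with the explicit Clifford-module construction. I would also double-check that the real versus complex ambient space in Theorem~\ref{t13} causes no discrepancy, which is immediate because the matrices involved are real.
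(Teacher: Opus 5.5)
Your proposal is correct. For the second equality you follow the paper's proof. Theorem~\ref{t36} is already the statement over $\mathbb R^N$, and its content (Lemmas~\ref{l310}--\ref{l311}) is what you re-derive: pointwise independence of the fields is equivalent to invertibility of every nonzero combination $\sum_i c_iA_i$. You then combine this with Kannaka--Tojo's value $\rho^{(2)}(\mathfrak{so}(N,N),\iota)=\rho(N)$. You also observe that $X\mapsto\begin{pmatrix}0&X\\X^{T}&0\end{pmatrix}$ identifies $\mathfrak p$ with $M(N,\mathbb R)$ and preserves invertibility. That would let you bypass Kannaka--Tojo and use the classical bound $\rho(N)$ for invertible subspaces of $M(N,\mathbb R)$ directly.

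For the first equality your route differs. The paper applies Theorem~\ref{t36} to the standard representation of $O(N)$. It then simply quotes from the literature that the largest subspace of $\mathfrak o(N)$ with all nonzero elements invertible has dimension $\rho(N)-1$. You prove that count instead:
\begin{itemize}
\item the upper bound comes from adjoining $I$ and using $x\perp Bx$ for skew $B$, which lands you in the classical problem for $M(N,\mathbb R)$;
\item the lower bound comes from explicit Hurwitz--Radon matrices, with $\bigl(\sum_i c_iB_i\bigr)^{T}\bigl(\sum_i c_iB_i\bigr)=\|c\|^2I$.
\end{itemize}

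The paper's version is shorter and shows the theorem as a direct corollary of its general machinery. Yours is self-contained apart from one classical input: the invertible-subspace bound together with the Hurwitz--Radon construction. It also explains the ``$+1$'' concretely as the radial field $x\mapsto x$. That field is orthogonal to every $(O(N),\mathfrak o(N))$-fundamental field but is not itself one.
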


Our final result shows that, in a special setting, $\rho^+_{G,\mathfrak{s}}$ is closely related to Clifford structures.

Let $M$ be a $G$-manifold, where the action is denoted by $\sigma$, and let $\nabla$ be an affine connection on $M$.
\begin{Thm}\label{t16}
    Assume that there exists a $G$-invariant Riemannian metric $g$ on $M$ such that $\nabla$ is the Levi--Civita connection of $g$. Then the following are equivalent.
    \begin{description}
\item[(1)]$n \leq \rho^+_{G, \mathfrak{s}}(M, \sigma, \nabla)$.
\item[(2)]There exists a rank $n$ Clifford structure $(E, h, \varphi)$ satisfying the following.
\begin{description}
\item[$\bullet$]$E$ is trivial.
\item[$\bullet$]There exist sections $s_1,\dots,s_n \in \Gamma(E)$ forming an $h$-orthonormal frame such that for each $i\in \{1,\dots,n\}$, $\varphi\circ s_i \in \widetilde{\nabla}^M\mathfrak{s}$.
 \end{description}
 \end{description}
 Here, \(\widetilde{\nabla}:\mathfrak X(M)\to \operatorname{End}_{C^\infty(M)}(\mathfrak X(M))\cong \Gamma(\operatorname{End}(TM))\) denotes the map induced by \(\nabla\).
\end{Thm}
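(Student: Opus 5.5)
The plan is to prove both implications directly, using the trivial bundle $E=M\times\mathbb R^n$ as the bridge. I take a rank $n$ Clifford structure $(E,h,\varphi)$ in the usual sense. That is, $E\to M$ is a real vector bundle of rank $n$ with a fibre metric $h$, and $\varphi\colon E\to\operatorname{End}(TM)$ is a bundle morphism such that $\varphi(s)^2=-h(s,s)\,\mathrm{id}$ and each $\varphi(s)$ is $g$-skew-symmetric. Here $\varphi\circ s$ denotes the element of $\Gamma(\operatorname{End}(TM))\cong\operatorname{End}_{C^\infty(M)}(\mathfrak X(M))$ given by $x\mapsto\varphi(s(x))$. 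I also use the convention $Cl_n^+=Cl_n$, generated by $e_1,\dots,e_n$ with $e_ie_j+e_je_i=-2\delta_{ij}$. Condition (1) means that for this $n$ there is an algebra homomorphism $f$ as in the definition, because restricting a homomorphism on $Cl_m$ to the subalgebra $Cl_n\subset Cl_m$ handles $n<\rho^+$. Throughout, $\nabla X_A=\widetilde\nabla X_A$ is the tensorial endomorphism $Y\mapsto\nabla_Y X_A$.

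\emph{The preliminary observation} is where the hypothesis on $g$ is used. Since $g$ is $G$-invariant, every fundamental vector field $X_A$ is a Killing field: $L_{X_A}g=0$, because the flow of $X_A$ is $\sigma(\exp(tA))$ up to sign convention. Since $\nabla$ is the Levi--Civita connection, torsion-freeness and metric compatibility give the standard equivalence
\[
(L_{X}g)(Y,Z)=g(\nabla_Y X,Z)+g(Y,\nabla_Z X).
\]
So $\nabla X_A$ is $g$-skew-symmetric for every $A\in\mathfrak g$, in particular for $A\in\mathfrak s$. I expect this to be the only non-formal point. It is exactly what is needed to meet the skew-symmetry requirement in the definition of a Clifford structure.

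\emph{(1)$\Rightarrow$(2).} Let $f\colon Cl_n\to\operatorname{End}_{C^\infty(M)}(\mathfrak X(M))$ be a homomorphism with $J_i\coloneqq f(e_i)=\nabla X_{A_i}$ and $A_i\in\mathfrak s$. The Clifford relations give $J_iJ_j+J_jJ_i=-2\delta_{ij}\,\mathrm{id}$. Take $E=M\times\mathbb R^n$ with $h$ the standard inner product, and define $\varphi(x,v)=\sum_i v_iJ_i|_x$. This is a smooth bundle morphism because the $J_i$ are tensors. Expanding the square and using anticommutation gives $\varphi(v)^2=-|v|^2\,\mathrm{id}$. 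By the preliminary observation each $\varphi(v)$ is $g$-skew, so $(E,h,\varphi)$ is a rank $n$ Clifford structure. The constant sections $s_i=e_i$ form an $h$-orthonormal frame of the trivial bundle $E$, and $\varphi\circ s_i=J_i=\widetilde\nabla X_{A_i}\in\widetilde\nabla^M\mathfrak s$.

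\emph{(2)$\Rightarrow$(1).} Let $s_1,\dots,s_n$ be the given orthonormal frame and set $J_i\coloneqq\varphi\circ s_i\in\widetilde\nabla^M\mathfrak s$. Polarizing $\varphi(s)^2=-h(s,s)\,\mathrm{id}$ pointwise, with $s=s_i+s_j$, gives $J_iJ_j+J_jJ_i=-2h(s_i,s_j)\,\mathrm{id}=-2\delta_{ij}\,\mathrm{id}$. By the universal property of $Cl_n$ there is a unique $\mathbb R$-algebra homomorphism $f\colon Cl_n\to\operatorname{End}_{C^\infty(M)}(\mathfrak X(M))$ with $f(e_i)=J_i\in\nabla^M\mathfrak s$. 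Hence $n\le\rho^+_{G,\mathfrak s}(M,\sigma,\nabla)$. This direction uses neither triviality of $E$ nor the Killing property. The only remaining care is the bookkeeping identification of $\nabla^M\mathfrak s$ with $\widetilde\nabla^M\mathfrak s$ under $\operatorname{End}_{C^\infty(M)}(\mathfrak X(M))\cong\Gamma(\operatorname{End}(TM))$, together with checking that the sign conventions for $Cl_n^+$ and for the Clifford structure match.
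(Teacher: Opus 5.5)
Your proof is correct and follows essentially the same route as the paper. The paper uses the same Killing-field lemma to get skew-symmetry of $\nabla X_A$, giving $\rho^+_{G,\mathfrak s}(M,g,\sigma,\widetilde\nabla)=\rho^+_{G,\mathfrak s}(M,\sigma,\nabla)$. It then uses the same trivial bundle $M\times\mathbb R^n$ with constant sections, and the universal property of $Cl_n$ for the converse; the only difference is that it factors the argument through this auxiliary invariant, while you inline it. One small addition: the paper's (Moroianu--Semmelmann) definition requires $E$ to be oriented, which your trivial bundle satisfies with its standard orientation.
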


\section{Preliminaries}\phantomsection

\subsection{Hurwitz--Radon numbers associated with Lie algebras by Kannaka--Tojo}\phantomsection

In this subsection, we review the generalized Hurwitz--Radon numbers introduced by Kannaka--Tojo \cite{KannakaTojo26} and summarize their basic definitions and main results.

Let $(\mathfrak{g}, \iota)$ be a pair of a real reductive Lie algebra $\mathfrak{g}$ and a faithful representation
$
\iota:\mathfrak{g} \to \mathfrak{gl}(N, \mathbb{C}).
$
We assume that the image $\iota(\mathfrak{g})$ is self-adjoint, namely, $\iota(\mathfrak{g})$ is closed under taking adjoint operators with respect to some positive definite hermitian form $\langle, \rangle$ on $\mathbb{C}^N$. Then there exists an involutive automorphism $\theta$ of $\mathfrak{g}$ such that
$$
\langle \iota(X)v, w\rangle = - \langle v, \iota (\theta(X))w\rangle \quad (X \in \mathfrak{g}, v, w \in \mathbb{C}^N).
$$
We denote by $\mathfrak{g} = \mathfrak{k} +\mathfrak{p}$ the  eigenspace decomposition of $\mathfrak{g}$ with respect to $\theta$, namely, $\mathfrak{k}$ (resp. $\mathfrak{p}$) is the eigenspace of $\theta$ with eigenvalue $1$ (resp. $-1$). If $\mathfrak{g}$ is semisimple, then any faithful representation $\iota$ of $\mathfrak{g}$ satisfies our assumption, and $\theta$ is a Cartan involution of $\mathfrak{g}$.

\begin{Def}[Kannaka--Tojo  \cite{KannakaTojo26}]
    In the setting above, we define two integers $\rho^{(i)}(\mathfrak{g}, \iota )$ $(i = 1, 2)$ as the largest $n \in \mathbb{N}$ for which there exists an $\mathbb{R}$-linear map $f:\mathbb{R}^n\to \mathfrak{p}$ such that
    \begin{description}
\item[$\bullet$]$\rho^{(1)}(\mathfrak{g}, \iota)$ : $\iota(f(v))^2 = \| v \|^2I_N$ for any $v \in \mathbb{R}^n$,
\item[$\bullet$]$\rho^{(2)}(\mathfrak{g}, \iota)$ : $\iota(f(v))$ is invertible for any non-zero $v \in \mathbb{R}^n$.
 \end{description}
 Here, $\| v \|$ denotes the standard norm of $v \in \mathbb{R}^n$.
\end{Def}

Clearly,
$$
\rho^{(1)}(\mathfrak{g}, \iota) \leq \rho^{(2)}(\mathfrak{g}, \iota)
$$
holds.

For example, for the standard representation
$\iota:\mathfrak{so}(N,N)\to\mathfrak{gl}(2N,\mathbb{C})$,
one has
\[
\rho^{(1)}(\mathfrak{so}(N,N),\iota)
=
\rho^{(2)}(\mathfrak{so}(N,N),\iota)
=
\rho(N).
\]

For classical Lie algebras and their standard representations, the values of $\rho^{(1)}(\mathfrak g,\iota)$ and $\rho^{(2)}(\mathfrak g,\iota)$ were completely determined by Kannaka--Tojo.
\begin{Fact}
[Kannaka--Tojo {\cite[Theorem B]{KannakaTojo26}}]\label{fact:KT-ThmB}
Let $\mathfrak{g}$ be a classical Lie algebra and $\iota$ its standard representation. If $\mathfrak{g} \neq \mathfrak{sl}(2N+1, \mathbb{D})$ $(\mathbb{D} = \mathbb{R}, \mathbb{C}, \mathbb{H}, N \geq 1)$, then
$$
\rho^{(1)}(\mathfrak{g},\iota)=\rho^{(2)}(\mathfrak{g},\iota),
$$
and the common value is summarized in Table 1. 

Moreover, in the exceptional case $\mathfrak{g} = \mathfrak{sl}(2N+1, \mathbb{D})$, we have
$$
\rho^{(1)}(\mathfrak{sl}(2N+1, \mathbb{D}),\iota) = 0, \quad \rho^{(2)}(\mathfrak{sl}(2N+1, \mathbb{D}),\iota) = 1.
$$
\begin{table}[htbp]
\centering
\small
\caption{Values of $\rho^{(i)}(\mathfrak g,\iota)$ associated with classical pairs}
\label{tab:KT}
\renewcommand{\arraystretch}{1.2}
\begin{tabular}{|c|c||c|c|}
\hline
$\mathfrak g$ & $\rho^{(1)}=\rho^{(2)}$
& $\mathfrak g$ & $\rho^{(1)}=\rho^{(2)}$ \\
\hline
 $\mathfrak{so}(N,N)$ & $\rho(N)$
& $\mathfrak{gl}(N,\mathbb C)$ & $2\,\mathrm{ord}_2(N)+1$ \\

$\mathfrak{gl}(N,\mathbb R)$ & $\rho(N/2)+1$& $\mathfrak{su}(N,N)$ & $2\,\mathrm{ord}_2(N)+2$ \\

$\mathfrak{sp}(N,\mathbb R)$ & $\rho(N/2)+2$& $\mathfrak{sl}(2N,\mathbb R)$ & $\rho(N)+1$ \\

$\mathfrak{sp}(N,\mathbb C)$ & $\rho(N/2)+3$ & $\mathfrak{sl}(2N,\mathbb C)$ & $2\,\mathrm{ord}_2(N)+3$ \\

$\mathfrak{sp}(N,N)$ & $\rho(N/2)+4$& $\mathfrak{sl}(2N,\mathbb H)$ & $\rho(N/2)+5$ \\

$\mathfrak{gl}(N,\mathbb H)$ & $\rho(N/4)+5$& $\mathfrak{sl}(1,\mathbb D)$ & $0$ \\

$\mathfrak{so}^*(2N)$ & $\rho(N/8)+6$& $\mathfrak{su}(p,q;\mathbb D)\ (p\neq q)$ & $0$ \\

$\mathfrak{so}(N,\mathbb C)$ & $\rho(N/16)+7$
& & \\

\hline
\end{tabular}
\end{table}
\end{Fact}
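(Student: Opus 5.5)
Since this Fact is imported from Kannaka--Tojo, I will only outline how I would prove it. The proof splits into a purely algebraic computation of $\rho^{(1)}$ via Clifford modules, and a topological upper bound for $\rho^{(2)}$ of Adams type.

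\textbf{Computing $\rho^{(1)}$.} The first step is to note that for $X\in\mathfrak p$ the defining relation of $\theta$ gives $\iota(X)^*=\iota(X)$. Polarizing $\iota(f(v))^2=\|v\|^2I_N$ then shows that $A_i:=\iota(f(e_i))$ are pairwise anticommuting Hermitian involutions. Hence $\rho^{(1)}$ is the largest $n$ for which $\mathbb C^N$ carries a $Cl_{n,0}$-module structure whose generators lie in the prescribed real subspace $\iota(\mathfrak p)$.

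For each classical $\mathfrak g$ I would write $\mathfrak p$ explicitly and rewrite the conditions.
\begin{itemize}
\item For $\mathfrak{so}(N,N)$ one has $\mathfrak p=\{\begin{pmatrix}0&B\\ B^{T}&0\end{pmatrix}\}$. The relations become $B_iB_j^{T}+B_jB_i^{T}=2\delta_{ij}I_N$, a Hurwitz--Radon family of orthogonal matrices, so the maximum is $\rho(N)$ by the classical Hurwitz--Radon theorem.
\item For the other cases, the extra structure preserved by $\mathfrak g$ (complex, quaternionic, symplectic or orthogonal forms, or the grading of $\mathfrak{su}(N,N)$, $\mathfrak{sp}(N,N)$) supplies a fixed set of operators that commute or anticommute with the $A_i$. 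These raise or lower the Clifford index by a fixed amount.
\end{itemize}
Using the classification and $8$-periodicity of real Clifford modules, one reads off the maximal $n$ in each case. This should produce the shifted values $\rho(N/2)+k$, $\rho(N/4)+5$, $\rho(N/8)+6$, $\rho(N/16)+7$, and $2\,\mathrm{ord}_2(N)+c$ in the complex cases. For $\mathfrak{su}(p,q;\mathbb D)$ with $p\ne q$ and for $\mathfrak{sl}(2N+1,\mathbb D)$ no suitable involution exists. In the first case $\mathfrak p$ consists of off-diagonal blocks, so an involution would identify spaces of different dimensions. In the second, a traceless involution has $\pm1$-eigenspaces of equal dimension, which forces even dimension. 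This gives $\rho^{(1)}=0$ there.

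\textbf{Lower bound for $\rho^{(2)}$.} Since $\rho^{(1)}\le\rho^{(2)}$, it remains to show $\rho^{(2)}\le\rho^{(1)}$ outside the exceptional family, and to treat $\mathfrak{sl}(2N+1,\mathbb D)$ separately.

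\textbf{The exceptional family.} For $\mathfrak{sl}(2N+1,\mathbb D)$, take $f(v)$ for $v\in\mathbb R^n$. The map $v\mapsto\det\iota(f(v))$ is a homogeneous polynomial of odd degree (after taking real forms), so it changes sign under $v\mapsto -v$. If $n\ge2$, the unit sphere $S^{n-1}$ is connected, so this polynomial must vanish somewhere on it, and invertibility fails. Hence $\rho^{(2)}\le1$. A diagonal traceless invertible matrix in $\mathfrak p$ shows $\rho^{(2)}\ge1$.

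\textbf{Upper bound for $\rho^{(2)}$ in the remaining cases.} I would convert an $n$-dimensional linear family of invertible matrices into vector fields. For $\mathfrak{so}(N,N)$, write the family as $B(v)$ with $B(v)$ invertible for $v\neq0$. Then the vector fields
\[
x\mapsto \text{projection of } B(e_i)B(e_1)^{-1}x \text{ orthogonal to } x,\qquad i=2,\dots,n,
\]
are pointwise linearly independent on $S^{N-1}$. Adams' theorem therefore gives $n-1\le\rho(N)-1$.

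For the other real, complex and quaternionic cases, I would use the analogous results bounding linear spaces of invertible (Hermitian) matrices. These are the Adams--Lax--Phillips type theorems, again derived from Adams' vector-field theorem via K-theory and Adams operations. After reduction to the correct real form, these should give exactly the values in Table~\ref{tab:KT}.

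\textbf{Expected main obstacle.} I expect the hardest step to be this last one: carrying out the case-by-case matching of the $\rho^{(2)}$ upper bounds to the Adams-type theorems. In particular, one has to check that the structural shifts such as $+5,+6,+7$ agree on both the Clifford-module side and the topological side.
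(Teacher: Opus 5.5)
The paper gives no proof of this Fact. It is quoted from Kannaka--Tojo (Theorem B), so there is nothing in the paper to compare your outline with, and it has to stand on its own. As written it does not prove the Fact.

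\textbf{What works.} The following parts are fine:
\begin{itemize}
\item the $\mathfrak{so}(N,N)$ case (Hurwitz--Radon matrices for $\rho^{(1)}$, and the vector fields built from $B(e_i)B(e_1)^{-1}x$ plus Adams for $\rho^{(2)}$);
\item the block-size argument for $\mathfrak{su}(p,q;\mathbb D)$, $p\neq q$, which in fact gives $\rho^{(2)}=0$ directly;
\item the traceless-involution argument for $\rho^{(1)}(\mathfrak{sl}(2N+1,\mathbb D))=0$;
\item the bound $\rho^{(2)}(\mathfrak{sl}(2N+1,\mathbb D))\ge 1$.
\end{itemize}

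\textbf{The main gap: $\rho^{(2)}\le\rho^{(1)}$ outside $\mathfrak{so}(N,N)$.} This inequality is the substance of the Fact, and you prove it only for $\mathfrak{so}(N,N)$. For every other row you appeal to ``Adams--Lax--Phillips type theorems after reduction to the correct real form,'' and these do not give the table values. Those theorems bound linear spaces of invertible matrices inside the \emph{whole} space of real symmetric, complex Hermitian or quaternion-Hermitian matrices. In most rows, $\iota(\mathfrak p)$ is a proper subspace cut out by (anti)commutation with extra structure, and then the ambient bound is not sharp.

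A concrete example is $\mathfrak{sp}(N,\mathbb R)$. Here $\iota(\mathfrak p)$ is the space of real symmetric $2N\times 2N$ matrices anticommuting with $J$. Adams--Lax--Phillips only gives $\rho^{(2)}\le\rho(N)+1$, which is $5$ for $N=4$, whereas the table value is $\rho(2)+2=4$.

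What is missing is a topological obstruction that remembers the extra structure, carried out row by row. One possibility: pass to the odd family of involutions $v\mapsto\operatorname{sgn}\iota(f(v))$, which still (anti)commutes with the structure operators. Then do a $K$-theory computation over $\mathbb{RP}^{n-1}$ for bundles carrying the corresponding enlarged Clifford-module structure. This is the real content of Kannaka--Tojo's theorem, and you flag it yourself as the main obstacle without supplying it. Likewise, the $\rho^{(1)}$ entries other than $\mathfrak{so}(N,N)$ are only asserted (``should produce'').

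\textbf{An error in the exceptional family for $\mathbb D=\mathbb H$.} The standard representation of $\mathfrak{sl}(2N+1,\mathbb H)$ acts on $\mathbb C^{4N+2}$. So $v\mapsto\det\iota(f(v))$ has even degree $4N+2$, and it is in fact always $\ge 0$. ``Taking real forms'' replaces $\det$ by $|\det|^2$, which makes the degree even for $\mathbb D=\mathbb C$ as well. Hence the sign-change argument fails as stated for $\mathbb H$. For $\mathbb C$ it works only because the determinant of a Hermitian matrix is real; for general complex matrices an odd complex-valued polynomial such as $(v_1+iv_2)^{2N+1}$ need not vanish on $S^1$.

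A uniform fix uses that $\iota(f(v))$ is Hermitian:
\begin{itemize}
\item For $n\ge 2$ the sphere $S^{n-1}$ is connected, so the inertia of the invertible family $\iota(f(v))$ is constant on it.
\item Since $f(-v)=-f(v)$, passing from $v$ to $-v$ exchanges the positive and negative eigenspaces.
\item These eigenspaces are $\mathbb D$-subspaces, so they must have equal $\mathbb D$-dimension.
\item This contradicts the fact that the total $\mathbb D$-dimension $2N+1$ is odd.
\end{itemize}
Equivalently, you can use the Moore determinant, which is a real polynomial of odd degree $2N+1$ on quaternion-Hermitian matrices.
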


The integer $\rho^{(1)}(\mathfrak g,\iota)$ can also be described in terms of Lie group homomorphisms from $Spin(n,1)$.

\begin{Fact}[Kannaka--Tojo {\cite[Proposition 5.4]{KannakaTojo26}}]\label{f23}
Let $G$ be a classical semisimple Lie subgroup of $GL(N,\mathbb{C})$,
$\tilde{\iota}:G \to GL(N, \mathbb{C})$ the inclusion map, and $\iota \coloneqq (d\tilde{\iota})_e$ the differential map.
Then the following claims are equivalent for any integer $n \geq 2$:
\begin{description}
\item[(1)]$n \leq \rho^{(1)}(\mathfrak{g}, \iota)$;
\item[(2)]There exists a Lie group homomorphism $\varphi:Spin(n, 1) \to G$ such that $\tilde{\iota}(\varphi(-1)) = -I_N$;
\item[(3)]There exists a Lie group homomorphism $\varphi:Spin(n, 1) \to G$ such that the representation $\tilde{\iota}\circ \varphi$  is equivalent to a direct sum of
several copies of the spin representation $S$ when $n$ is even, and
to a direct sum of several copies of the semispin representations
$S_1$ and $S_2$ when $n$ is odd.
 \end{description}
\end{Fact}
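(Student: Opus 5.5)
The plan is to prove the cycle $(1)\Rightarrow(3)\Rightarrow(2)\Rightarrow(1)$. The bridge throughout is the Clifford algebra, realizing $Spin(n,1)$ inside $Cl_n$. Recall that $Cl_{n,1}^{0}\cong Cl_n$ (the sign convention is chosen so that the generators $e_i$ of $Cl_n$ satisfy $e_i^2=+1$). Under this isomorphism, $\mathfrak{spin}(n,1)$ is identified with the subspace $\operatorname{span}\{\tfrac12 e_i,\ \tfrac12 e_ie_j\ (i<j)\}$. The $\tfrac12 e_ie_j$ span $\mathfrak{spin}(n)$ and the $\tfrac12 e_i$ are the boosts. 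One checks directly that this subspace is closed under commutators: $[e_i,e_j]=2e_ie_j$, and $[e_ie_j,e_k]$ lies in the span of the $e$'s. The group $Spin(n,1)$ is the subgroup of $(Cl_n)^\times$ generated by exponentials of this subspace.

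\textbf{Step $(1)\Rightarrow(3)$.} Given $f:\mathbb R^n\to\mathfrak p$ with $\iota(f(v))^2=\|v\|^2I_N$, polarization gives $X_i\coloneqq\iota(f(e_i))$ with $X_i^2=I_N$ and $X_iX_j=-X_jX_i$ for $i\neq j$. Hence $e_i\mapsto X_i$ extends to an $\mathbb R$-algebra homomorphism $\Phi:Cl_n\to M_N(\mathbb C)$. Its restriction to $\mathfrak{spin}(n,1)$ lands in $\iota(\mathfrak g)$, because $X_i\in\iota(\mathfrak p)$ and $\iota(\mathfrak g)$ is closed under brackets. Restricting $\Phi$ to $Spin(n,1)\subset Cl_n^\times$ then gives a group homomorphism into the subgroup generated by $\exp\iota(\mathfrak g)$, hence into $G$; call it $\varphi$. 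Since $\Phi$ is unital, $\varphi(-1)=-I_N$. Moreover $\tilde\iota\circ\varphi$ is the restriction of a $Cl_n$-module, hence (after complexification) of a $Cl_{n+1}\otimes\mathbb C$-module. Such a module is a sum of copies of the spin representation, or of the two semispin representations according to parity. This gives (3) and, in passing, (2). Working inside the Clifford algebra avoids any fundamental-group issue, even though $Spin(2,1)\cong SL(2,\mathbb R)$ is not simply connected.

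\textbf{Step $(3)\Rightarrow(1)$.} The image $\varphi(Spin(n))$ is compact, so by conjugacy of maximal compact subgroups it can be conjugated inside $G$ into $K=\exp\mathfrak k$ (the conjugate is again a homomorphism into $G$). Then $d\varphi$ sends the $(-1)$-eigenspace of the Cartan involution of $\mathfrak{so}(n,1)$, namely the boosts $B_i$, into $\mathfrak p$. On the spin and semispin representations the boost $B_i$ acts by $\tfrac12\gamma_i$ with $\gamma_i^2=1$ and the $\gamma_i$ anticommuting. Setting $f(e_i)\coloneqq 2\,d\varphi(B_i)$ therefore yields $\iota(f(v))^2=\|v\|^2I_N$, which is (1).

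\textbf{Step $(2)\Rightarrow(3)$ (main obstacle).} The condition $\varphi(-1)=-I_N$ only says that every irreducible constituent is a genuine spin-type representation. Such a constituent is a summand of $S\otimes W$ for some $W$ factoring through $SO(n,1)$, but need not be $S$ itself. To pass to pure copies of $S$ (or $S_1,S_2$), I would use that $G$ is classical: $G$ is the stabilizer in $GL(N,\mathbb C)$, or a real form of it, of a collection of bilinear or sesquilinear forms and real or quaternionic structures. Whether $\tilde\iota\circ\varphi$ lands in $G$ then depends only on which invariant forms and structures of each symmetry type the representation carries. I would try to show that replacing each isotypic piece $V_\lambda\otimes\mathbb C^{m}$ by $S\otimes\mathbb C^{m'}$ (or the semispin analogue) of the same total dimension preserves these invariants. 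The tool would be the known reality and duality types of spin representations, which depend on $n$ mod $8$ exactly as for $V_\lambda$ with $-1\mapsto-1$. This yields a new homomorphism into $G$ satisfying (3). This type-matching argument is the step I expect to be genuinely delicate and case-dependent.
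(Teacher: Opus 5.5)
The paper does not prove this statement; it quotes it from Kannaka--Tojo, so your argument has to stand on its own.

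\textbf{What works.} Your direction $(1)\Rightarrow(2),(3)$ is sound. The relations $X_i^2=I_N$ and $X_iX_j=-X_jX_i$ give a unital algebra map on $Cl_{n,1}^0$ (your $Cl_n$ with $e_i^2=+1$, i.e.\ the paper's $Cl_n^-=Cl_{0,n}$). Moreover $X_iX_j=\tfrac12[X_i,X_j]\in\iota(\mathfrak k)$, so the connected group $Spin(n,1)$ is carried into the identity component of $G$ with $-1\mapsto -I_N$. There is one small slip: $\mathbb C^N$ is a module over $Cl_n\otimes\mathbb C\cong (Cl_{n+1}\otimes\mathbb C)^0$, not over $Cl_{n+1}\otimes\mathbb C$. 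For $n$ odd, $S_1^{a}\oplus S_2^{b}$ with $a\neq b$ does not extend to the full algebra, but your conclusion is unaffected.

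\textbf{Gap in $(3)\Rightarrow(1)$.} Conjugating $\varphi(Spin(n))$ into $K$ does not force $d\varphi$ to send the boosts into $\mathfrak p$. Here is a counterexample. Take $G=SL(2,\mathbb C)$, $\mathfrak k=\mathfrak{su}(2)$, $Spin(2,1)=SL(2,\mathbb R)$, and $\varphi_t(h)=g_thg_t^{-1}$ with $g_t=\exp(t\sigma_y)$, $\sigma_y=\begin{pmatrix}0&-i\\ i&0\end{pmatrix}$.
\begin{itemize}
\item Since $g_t$ commutes with $SO(2)$, we have $\varphi_t(SO(2))\subset SU(2)$.
\item $\tilde\iota\circ\varphi_t$ is the spin representation, so all your hypotheses hold.
\item For the boost $\sigma_z=\mathrm{diag}(1,-1)$, however, $d\varphi_t(\sigma_z)=\cosh(2t)\,\sigma_z+\sinh(2t)\,i\sigma_x$, which is not Hermitian for $t\neq0$.
\end{itemize}
So your $f$ does not take values in $\mathfrak p$. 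Agreement on the compact part does not pin down what happens on the noncompact part; you need full compatibility of Cartan involutions. The standard tool is Mostow's theorem:
\begin{itemize}
\item The semisimple subalgebra $d\varphi(\mathfrak{so}(n,1))$ is stable under some Cartan involution of $\mathfrak g$, so some $\mathrm{Ad}(g)$ with $g\in G$ makes it $\theta$-stable.
\item $\theta$ then restricts to a Cartan involution of the image.
\item Cartan involutions of $\mathfrak{so}(n,1)$ are conjugate under inner automorphisms, so a further conjugation by an element of the image gives $\theta\circ d\varphi=d\varphi\circ\theta_{\mathfrak{so}(n,1)}$.
\end{itemize}
Only then is $2\,d\varphi(B_i)\in\iota(\mathfrak p)$.

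\textbf{Gap in $(2)\Rightarrow(3)$.} This step is your only route to $(2)\Rightarrow(1)$, and the only place the hypothesis that $G$ is classical is used. What you give is a strategy, not a proof. The facts it relies on are the substance of the equivalence, and you only assert them:
\begin{itemize}
\item Whether a representation can be conjugated into $G$ depends only on its invariant structures. This must include the signatures of invariant Hermitian or symmetric forms; these are what exclude $(2)$ for $SO(p,q)$, $SU(p,q)$, $Sp(p,q)$ with $p\neq q$.
\item Every genuine irreducible $V_\lambda$ of $Spin(n,1)$ has the same real/quaternionic/complex type and the same orthogonal/symplectic type as $S$ (resp.\ $S_1,S_2$).
\item Every such $V_\lambda$ also behaves the same way under duality and under complex conjugation with respect to $\mathfrak{so}(n,1)$. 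Note that $S_1,S_2$ are exchanged by conjugation or by duality, depending on $n$.
\end{itemize}
You also need $\dim S\mid\dim V_\lambda$ (resp.\ $\dim S_1\mid\dim V_\lambda$) to match dimensions. This one is easy: the sign-change part of the Weyl group acts freely on weights with nonzero half-integral coordinates.

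As it stands, the proposal establishes only $(1)\Rightarrow(2)$ and $(1)\Rightarrow(3)$. The converses need the Mostow-type conjugation above together with an actual case analysis, or some other argument, for $(2)\Rightarrow(1)$.
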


Moreover, $\rho^{(1)}(\mathfrak g,\iota)$ gives the largest $n$ for which $\mathrm{Spin}(n,1)$ can act properly on $G/H$.
\begin{Fact}[Kannaka--Tojo {\cite[Theorem A, D]{KannakaTojo26}}]
Fix $n \geq 2$. Let $G/H$ be a semisimple symmetric space, and assume that $\mathfrak{g}$ is simple. Also assume that $G$ admits a connected complexification. If $G$ and $H$ are locally isomorphic to one of the pairs in Table 2 and $\iota$ is the standard representation of $\mathfrak{g}$, then the following are equivalent:
$\mathrm{Spin}(n,1)$ acts properly on $G/H$, and
$n\le \rho^{(1)}(\mathfrak{g},\iota)$.
\end{Fact}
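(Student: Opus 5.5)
The plan is to reduce properness to a linear-algebraic condition on a single hyperbolic element via Kobayashi's properness criterion, and then to translate that condition into the Clifford condition defining $\rho^{(1)}(\mathfrak g,\iota)$ using Fact~\ref{f23}.

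\emph{Step 1: reduction to a Cartan subspace.} For a reductive subgroup $L$ acting on a reductive homogeneous space $G/H$, $L$ acts properly if and only if
\[
\mathfrak a_L\cap W\cdot\mathfrak a_H=\{0\},
\]
where $\mathfrak a_L,\mathfrak a_H\subset\mathfrak a$ are maximally split abelian subspaces of $\mathfrak l\cap\mathfrak p$ and $\mathfrak h\cap\mathfrak p$, conjugated into a fixed $\mathfrak a\subset\mathfrak p$, and $W$ is the Weyl group. Apply this to $L=\phi(\mathrm{Spin}(n,1))$ for a homomorphism $\phi:\mathrm{Spin}(n,1)\to G$, which we may conjugate so that it respects the Cartan involutions. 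Since $\mathrm{Spin}(n,1)$ has real rank one, $\mathfrak a_L=\mathbb R\,d\phi(Y)$ for a single hyperbolic element $Y$ of $\mathfrak{so}(n,1)$. Properness therefore becomes the statement that $\iota(d\phi(Y))$ is not $W$-conjugate into $\mathfrak a_H$, which is a condition on the multiset of eigenvalues of $\iota(d\phi(Y))$.

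\emph{Step 2: case check over Table~2.} For each pair $(G,H)$ in Table~2, write $\mathfrak a_H$ explicitly in the standard representation. I expect, and this is the content to be verified pair by pair, that $W\cdot\mathfrak a_H$ is exactly the set of elements of $\mathfrak a$ having a zero eigenvalue (for instance, in the $\mathfrak{so}(N,N)\supset\mathfrak{so}(N,N-1)$-type cases, one diagonal coordinate vanishes). Granting this, properness of the action of $\phi(\mathrm{Spin}(n,1))$ is equivalent to invertibility of $\iota(d\phi(Y))$.

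\emph{Step 3: representation-theoretic translation.} Decompose $\iota\circ d\phi$ into irreducibles of $\mathfrak{so}(n,1)$, or equivalently of $\mathfrak{so}(n+1,\mathbb C)$. The eigenvalues of $Y$ are the first coordinates of the weights. An irreducible module with integral (tensor-type) highest weight always contains a weight whose first coordinate is $0$: the weights form a saturated $W$-stable set in a coset of the root lattice, and that coset meets the hyperplane $\{x_1=0\}$. A spin-type module, by contrast, has all coordinates in $\tfrac12+\mathbb Z$, so no weight has first coordinate $0$. Hence $\iota(d\phi(Y))$ is invertible if and only if every irreducible constituent is of spin type. By the standard classification of spin-type representations, this is equivalent to $\phi(-1)$ acting by $-I_N$. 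Fact~\ref{f23}, (2)$\Leftrightarrow$(3)$\Leftrightarrow$(1), then gives $n\le\rho^{(1)}(\mathfrak g,\iota)$.

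For the converse, suppose $n\le\rho^{(1)}(\mathfrak g,\iota)$. Fact~\ref{f23} supplies a homomorphism $\phi$ whose representation is a sum of (semi)spin representations. For such $\phi$, all eigenvalues of $\iota(d\phi(Y))$ equal $\pm\tfrac12$ times a scalar, hence are nonzero, and Steps~1 and~2 yield properness.

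\emph{Main obstacle.} The hard part is Step~2: verifying, for every pair in Table~2, that $W\cdot\mathfrak a_H$ coincides precisely with the singular locus $\{\det\iota=0\}\cap\mathfrak a$. If this fails for some pair, I would instead compare eigenvalue multiplicity patterns directly. Spin representations force eigenvalues $\pm c$ with equal multiplicities, and I would check that this pattern avoids $W\cdot\mathfrak a_H$ by an explicit count. A secondary point to handle is the hypothesis that $G$ admits a connected complexification, which is what allows lifting Lie algebra homomorphisms to $\mathrm{Spin}(n,1)$, as needed in Fact~\ref{f23}.
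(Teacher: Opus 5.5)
The paper gives no proof of this statement; it is quoted from Kannaka--Tojo, so there is no in-paper route to compare with, and I assess your outline on its own. The architecture is sound: Kobayashi's criterion with $\mathfrak a_L=\mathbb R\,d\phi(Y)$, then ``$\iota(d\phi(Y))$ invertible $\Leftrightarrow$ every constituent is of spin type'' via first coordinates of weights, then Fact~\ref{f23}. The implication ``$n\le\rho^{(1)}(\mathfrak g,\iota)\Rightarrow$ proper'' also goes through. In all ten pairs every element of $\mathfrak a_H$ has a zero eigenvalue, so $W\cdot\mathfrak a_H\subseteq\{\det\iota=0\}\cap\mathfrak a$, and spin-type constituents give eigenvalues $\pm c\neq0$.

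\textbf{The gap is in Step~2, and it breaks the converse ``proper $\Rightarrow n\le\rho^{(1)}(\mathfrak g,\iota)$''.} The identity $W\cdot\mathfrak a_H=\{\det\iota=0\}\cap\mathfrak a$ you expect is false for three pairs: $SL(2N,\mathbb R)/SO(N+1,N-1)$, $SL(2N,\mathbb C)/SU(N+1,N-1)$ and $SL(2N,\mathbb H)/Sp(N+1,N-1)$.
\begin{itemize}
\item There $W=\mathfrak S_{2N}$ and $\mathfrak a_H$ consists of the elements $(b_1,-b_1,\dots,b_{N-1},-b_{N-1},0,0)$.
\item So $W\cdot\mathfrak a_H$ is the set of elements whose diagonal entries form a multiset symmetric under $x\mapsto -x$ and containing $0$.
\item For $N=2$, $\mathrm{diag}(2,-1,-1,0)$ is singular but does not lie in $W\cdot\mathfrak a_H$.
\end{itemize}
Your fallback (the $\pm c$ spin pattern avoids $W\cdot\mathfrak a_H$) only re-proves the easy direction. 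For ``proper $\Rightarrow$ spin type'' you need that a zero eigenvalue of $\iota(d\phi(Y))$ forces membership in $W\cdot\mathfrak a_H$.

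\textbf{The missing observation.} $Y$ is conjugate to $-Y$ under $\mathrm{Spin}(n)$, via the nontrivial element of the restricted Weyl group of $\mathfrak{so}(n,1)$. Hence $d\phi(Y)$ is $G$-conjugate to its negative, and $\iota(d\phi(Y))$ has symmetric spectrum. A symmetric multiset of even size $2N$ that contains $0$ contains it at least twice, so the element lies in $W\cdot\mathfrak a_H$. Step~2 should therefore be stated only for elements of $\mathfrak a$ that are $W$-conjugate to their negatives. In that form it holds for all ten pairs and the argument closes.

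\textbf{Two minor repairs.}
\begin{itemize}
\item In Step~3, ``the coset meets $\{x_1=0\}$'' does not by itself produce a weight there. Use root strings instead: $\lambda-t\epsilon_1$ for $0\le t\le 2\lambda_1$ in type $B$, and $\lambda-t(\epsilon_1\pm\epsilon_2)$ in type $D$.
\item Fact~\ref{f23} concerns the matrix group with its inclusion, whereas $G$ is only locally isomorphic to it. You must pass through $d\phi$ and re-integrate. This is automatic for $n\ge 3$ and uses the complexification for $n=2$, as you anticipated.
\end{itemize}
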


\begin{table}[htbp]
\centering
\small
\caption{Where $0\leq p \leq N/2$}
\begin{tabular}{cc} 
   $G$ & $H$ \\ \hline
   $SL(2N, \mathbb{R})$ & $SO(N+1, N-1)$  \\
   $SL(2N, \mathbb{C})$ & $SU(N+1, N-1)$  \\ 
   $SL(2N, \mathbb{H})$ & $Sp(N+1, N-1)$  \\
   $SO^{*}(4N)$ & $U(N+1, N-1)$  \\ 
   $SO(2N, \mathbb{C})$ & $SO(N+1, N-1)$  \\
   $SO(N, N)$ & $SO(p, p+1) \times  SO(N-p, N-p-1)$  \\ 
   $SU(N, N)$ & $S(U(p, p+1) \times  U(N-p, N-p-1))$  \\ 
   $Sp(N, N)$ & $Sp(p, p+1) \times  Sp(N-p, N-p-1)$  \\ 
   $SO^*(4N)$ & $SO^*(4p+2) \times  SO^*(4N-4p-2)$  \\ 
   $SO(2N, \mathbb{C})$ & $SO(2p + 1, \mathbb{C}) \times  SO(2N-2p -1, \mathbb{C})$  \\ 
\end{tabular}
\end{table}

\subsection{Clifford structures on Riemannian manifolds}\phantomsection

In this subsection, we recall the definition of Clifford structures on Riemannian manifolds following Moroianu--Semmelmann \cite{AndreiUwe11}.

Let $(M,g)$ be a Riemannian manifold. For $A\in\operatorname{End}(TM)$, we say that $A$ is skew-symmetric if
\[
g_p(Av,w)=-g_p(v,Aw)
\]
for all $p\in M$ and all $v,w\in T_pM$. We denote by
$\operatorname{End}^{-}(TM)$
the set of all skew-symmetric endomorphisms of $TM$.

Let $(E,h)$ be a Euclidean bundle over $M$. The bundle
\[
Cl(E,h)\coloneqq\bigsqcup_{x\in M} Cl(E_x,h_x)
\]
is called the Clifford bundle. Here, for each $x\in M$, $Cl(E_x,h_x)$ denotes the Clifford algebra of $(E_x,h_x)$.

\begin{Def}
    A rank $r$ Clifford structure on a Riemannian manifold $(M, g)$ is an oriented rank $r$ Euclidean bundle over $M$ together with a non-vanishing algebra bundle morphism called Clifford morphism, $\varphi:Cl(E, h) \to \textrm{End}(TM)$ which maps $E$ into $\textrm{End}^-(TM)$.
\end{Def}

\section{Hurwitz--Radon numbers for $G$-manifolds}\phantomsection
In this section, we define $\rho_{G,\mathfrak{s}}(M,\sigma)$ for $G$-manifolds and show that $\rho_{G,\mathfrak{s}}(M,\sigma)$ is an extension of $\rho^{(2)}(\mathfrak g,\iota)$ defined by Kannaka--Tojo. This provides a geometric interpretation of $\rho^{(2)}(\mathfrak g,\iota)$.

\subsection{Definition and properties of $\rho_{G, \mathfrak{s}}(M, \sigma)$}\phantomsection
Let $G$ be a Lie group, $M$ a $G$-manifold, and $\sigma$ its group action. Let also $\mathfrak{s} \subset \mathfrak{g}$ be a vector subspace.

\begin{Def}
For $A\in\mathfrak g$, the vector field $X_A\in\mathfrak X(M)$ defined by
$$
X_A: M \to TM,\quad p \mapsto (d\sigma_p)_e(A),
$$
where $\sigma_p:G\to M,\ g\mapsto \sigma(g,p)$, is called the fundamental vector field corresponding to $A$.

In particular, for $A\in \mathfrak{s}$, we call $X_A$ a $(G,\mathfrak{s})$-fundamental vector field.
\end{Def}

For a $G$-manifold, we define the following integer.
\begin{Def}
    \[
\rho_{G,\mathfrak{s}}(M,\sigma)
\coloneqq
\max\left\{
n\in\mathbb N \,\middle|\,
\begin{array}{l}
\text{there exist } n \textrm{ } (G, \mathfrak{s})\text{-fundamental vector fields on }M\\
\text{which are linearly independent at every point}
\end{array}
\right\}.
\]
\end{Def}

First, we discuss the functoriality of $\rho_{G,\mathfrak{s}}$.
\begin{Def}
We define a category $G\textrm{-}\mathbf{Mfd}$ as follows.

\begin{itemize}
\item An object of $G\textrm{-}\mathbf{Mfd}$ is a pair $(M,\sigma)$, where $M$ is a smooth manifold and
$\sigma:G\times M\to M$
is a smooth $G$-action.

\item A morphism
$\varphi:(M,\sigma)\to (N,\tau)$
is a smooth map $\varphi:M\to N$ such that:
\begin{enumerate}
\item $\varphi$ is surjective;
\item $\varphi$ is $G$-equivariant;
\item for each $p\in M$, the restriction
$\varphi|_{G\cdot p}:G\cdot p\to N$
is an immersion.
\end{enumerate}
\end{itemize}
\end{Def}

\begin{Prop}
 $\rho_{G,\mathfrak{s}}$ is a functor from $G\textrm{-}\mathbf{Mfd}$ to the partially ordered set $(\mathbb{Z}_{\geq 0},\leq)$.
\end{Prop}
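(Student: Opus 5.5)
Interpreting $(\mathbb{Z}_{\geq 0},\leq)$ as a category (one arrow $a\to b$ exactly when $a\leq b$), functoriality means: whenever there is a morphism $\varphi:(M,\sigma)\to(N,\tau)$ in $G\textrm{-}\mathbf{Mfd}$, we have $\rho_{G,\mathfrak{s}}(M,\sigma)\leq\rho_{G,\mathfrak{s}}(N,\tau)$. Identities and composition are then automatic, since a poset category has at most one arrow between two objects. So the whole proof reduces to this inequality, and I will prove it by pushing forward a family of pointwise linearly independent fundamental vector fields along $\varphi$.

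The first step is the compatibility of fundamental vector fields with equivariant maps. Equivariance gives $\varphi\circ\sigma_p=\tau_{\varphi(p)}$ as maps $G\to N$. Differentiating at $e$ yields
\[
d\varphi_p\bigl(X^M_A(p)\bigr)=X^N_A(\varphi(p))\qquad(A\in\mathfrak g,\ p\in M),
\]
so $X^M_A$ and $X^N_A$ are $\varphi$-related.

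For the second step, take $A_1,\dots,A_n\in\mathfrak{s}$ with $n=\rho_{G,\mathfrak{s}}(M,\sigma)$ such that $X^M_{A_1},\dots,X^M_{A_n}$ are linearly independent at every point of $M$. Fix $q\in N$ and use surjectivity of $\varphi$ to choose $p\in M$ with $\varphi(p)=q$. The vectors $X^M_{A_i}(p)$ lie in $T_p(G\cdot p)$, the image of $(d\sigma_p)_e$.

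The step needing care is the injectivity of $d\varphi_p$ on $T_p(G\cdot p)$. Here I use condition 3: $\varphi|_{G\cdot p}$ is an immersion, where $G\cdot p$ carries its natural immersed-submanifold structure $G/G_p$. I expect the main subtlety to be the precise meaning of ``immersion'' on an orbit that need not be embedded. To handle it, I will phrase the condition via the induced map $G/G_p\to N$. Its differential at $eG_p$ is injective, and $T_{eG_p}(G/G_p)\cong\mathfrak g/\mathfrak g_p$ maps isomorphically onto $\operatorname{Im}(d\sigma_p)_e$. Hence $d\varphi_p$ is injective on the span of the $X^M_{A_i}(p)$.

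Combining this injectivity with the first step, the vectors $X^N_{A_i}(q)=d\varphi_p(X^M_{A_i}(p))$ are linearly independent. Since $q\in N$ was arbitrary, $X^N_{A_1},\dots,X^N_{A_n}$ are $n$ $(G,\mathfrak{s})$-fundamental vector fields on $N$ that are linearly independent at every point. This gives $\rho_{G,\mathfrak{s}}(N,\tau)\geq n$, which is the required inequality.
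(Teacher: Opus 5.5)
Your proposal is correct and follows essentially the same route as the paper. Both reduce functoriality to the inequality $\rho_{G,\mathfrak{s}}(M,\sigma)\leq\rho_{G,\mathfrak{s}}(N,\tau)$, use equivariance and a preimage $p$ of $q$ to get $X^N_{A_i}(q)=(d\varphi)_p(X^M_{A_i}(p))$, and use the orbit-immersion condition to make $(d\varphi)_p$ injective on $\operatorname{Im}(d\sigma_p)_e$; your justification of that injectivity via the induced map $G/G_p\to N$ is a useful detail that the paper only asserts.
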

\begin{proof}
  It suffices to show that for any
$(M,\sigma),\ (N,\tau)\in \mathrm{Ob}(G\textrm{-}\mathbf{Mfd})$,
if there exists a morphism
$\varphi:(M,\sigma)\to (N,\tau)$,
then
$\rho_{G,\mathfrak{s}}(M,\sigma)\leq \rho_{G,\mathfrak{s}}(N,\tau)$.
Let $\varphi:M\to N$ be a morphism from $(M,\sigma)$ to $(N,\tau)$. Put
$n:=\rho_{G,\mathfrak{s}}(M,\sigma)$.
Let $X_1,\dots,X_n$ be pointwise linearly independent fundamental vector fields on $M$ with respect to $\sigma$, corresponding to $A_1,\dots,A_n\in \mathfrak{s}$. Let $Y_1,\dots,Y_n$ be the fundamental vector fields on $N$ with respect to $\tau$, corresponding to the same elements $A_1,\dots,A_n$.
Then, for each $q\in N$ and each $p\in \varphi^{-1}(q)$, we have
\begin{align*}
Y_i(q)
&=(d\tau_q)_e(A_i)
=(d(\varphi\circ \sigma_p))_e(A_i) \\
&=((d\varphi)_p\circ (d\sigma_p)_e)(A_i)
=(d\varphi)_p(X_i(p)).
\end{align*}
Now suppose that $t_1,\dots,t_n\in\mathbb R$ satisfy
\[
\sum_{i=1}^n t_i Y_i(q)=0.
\]
Then
\[
\sum_{i=1}^n t_iY_i(q)
=
(d\varphi)_p(\sum_{i=1}^n t_iX_i(p))
=
(d\varphi)_p(\sum_{i=1}^n t_i(d\sigma_p)_e(A_i))
=0.
\]
Since $\varphi$ is an immersion on each $G$-orbit, $(d\varphi)_p$ is injective on
$\operatorname{Im}(d\sigma_p)_e$.
Hence, 
$\sum_{i=1}^n t_iX_i(p)=0$.
By the pointwise linear independence of $X_1,\dots,X_n$ on $M$, it follows that
$t_1=\cdots=t_n=0$.
Therefore, $Y_1,\dots,Y_n$ are pointwise linearly independent on $N$. Consequently,
$\rho_{G,\mathfrak{s}}(M,\sigma)\leq \rho_{G,\mathfrak{s}}(N,\tau)$.
Thus, $\rho_{G,\mathfrak{s}}$ is well-defined as a functor.
\end{proof}

Let
$\eta : GL(N,\mathbb{C}) \to GL(2N,\mathbb{R})$, 
$A+iB \mapsto
\begin{pmatrix}
A & -B\\
B & A
\end{pmatrix}$,
and let $\varphi : G \to GL(N,\mathbb{C})$ be a Lie group homomorphism.
Then, the following clearly holds:
\[
\rho_{G,\mathfrak{s}}(\mathbb{C}^N\backslash\{0\},\varphi)
=
\rho_{G,\mathfrak{s}}(\mathbb{R}^{2N}\backslash\{0\},\eta\circ\varphi).
\]

\subsection{Geometric interpretation of $\rho^{(2)}(\mathfrak{g}, \iota)$}\phantomsection

In this subsection, we discuss the relationship between $\rho_{G,\mathfrak{s}}(M,\sigma)$ and $\rho^{(2)}(\mathfrak{g},\iota)$ defined by Kannaka--Tojo.

Let $G$ be a Lie group, let $\mathfrak{s} \subset \mathfrak{g}$ be a vector subspace, let $\varphi:G \to GL(N, \mathbb{R})$ be a Lie group homomorphism, and put $\iota \coloneqq (d\varphi)_{e}$. Also, for each $x \in \mathbb{R}^N\backslash\{0\}$, put $\varphi_x:G \to \mathbb{R}^{N}\backslash\{0\}, g \mapsto \varphi(g)x$.
\begin{Def}
    In the setting above, we define two integers $\rho_\mathfrak{s}^{(i)}(\mathfrak{g}, \iota )$ $(i = 1, 2)$ as the largest $n \in \mathbb{N}$ for which there exists an $\mathbb{R}$-linear map $f:\mathbb{R}^n\to \mathfrak{s}$ such that
    \begin{description}
\item[$\bullet$]$\rho_\mathfrak{s}^{(1)}(\mathfrak{g}, \iota)$ : $\iota(f(v))^2 = \| v \|^2I_N$ for any $v \in \mathbb{R}^n$,
\item[$\bullet$]$\rho_\mathfrak{s}^{(2)}(\mathfrak{g}, \iota)$ : $\iota(f(v))$ is invertible for any non-zero $v \in \mathbb{R}^n$.
 \end{description}
 Here, $\| v \|$ denotes the standard norm of $v \in \mathbb{R}^n$. In particular, if $\mathfrak{g}=\mathfrak{k}+\mathfrak{p}$ is a Cartan decomposition and $\mathfrak{s}=\mathfrak{p}$, then
$$
\rho^{(i)}_{\mathfrak{s}}(\mathfrak{g},\iota)=\rho^{(i)}(\mathfrak{g},\iota).
$$
\end{Def}

Let $\eta:GL(N, \mathbb{C}) \to GL(2N, \mathbb{R}), A + iB \mapsto \begin{pmatrix}
   A & -B \\
   B & A
\end{pmatrix}$. Then the following clearly holds:
$$
\rho^{(i)}_\mathfrak{s}(\mathfrak{g}, (d(\eta \circ \varphi))_e) = \rho^{(i)}_\mathfrak{s}(\mathfrak{g}, (d\varphi)_e).
$$

As one of the main results, we obtain the following, from which the first part of Theorem~\ref{t13} follows immediately.
\begin{Thm}\label{t36}
The following holds:
    $$
    \rho_{G, \mathfrak{s}}(\mathbb{R}^N\backslash\{0\}, \varphi) = \rho^{(2)}_\mathfrak{s}(\mathfrak{g}, \iota).
    $$
\end{Thm}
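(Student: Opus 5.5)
The plan is to compute the fundamental vector fields explicitly and reduce pointwise linear independence to invertibility of matrices. For $A \in \mathfrak{g}$ and $x \in \mathbb{R}^N\backslash\{0\}$, differentiate $t \mapsto \varphi(\exp tA)x$ at $t=0$. This gives
\[
X_A(x) = (d\varphi_x)_e(A) = \iota(A)x .
\]
Thus the fundamental vector field $X_A$ is the linear vector field $x \mapsto \iota(A)x$. This single identity drives both inequalities.

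\textbf{Step 1: $\rho_{G,\mathfrak{s}}(\mathbb{R}^N\backslash\{0\},\varphi) \le \rho^{(2)}_\mathfrak{s}(\mathfrak{g},\iota)$.} Let $X_{A_1},\dots,X_{A_n}$ with $A_i \in \mathfrak{s}$ be pointwise linearly independent. Define $f:\mathbb{R}^n \to \mathfrak{s}$ by $f(v)=\sum_i v_iA_i$; this is $\mathbb{R}$-linear. Suppose $v \neq 0$ and $\iota(f(v))$ is not invertible. Then there is $x \neq 0$ with
\[
0=\iota(f(v))x=\sum_i v_i\,\iota(A_i)x=\sum_i v_i X_{A_i}(x),
\]
which contradicts linear independence at $x$. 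Hence $\iota(f(v))$ is invertible for every nonzero $v$, so $f$ is admissible in the definition of $\rho^{(2)}_\mathfrak{s}$.

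\textbf{Step 2: the reverse inequality.} Let $f:\mathbb{R}^n\to\mathfrak{s}$ satisfy the invertibility condition, and put $A_i=f(e_i)\in\mathfrak{s}$. Suppose $\sum_i t_i X_{A_i}(x)=0$ at some $x \neq 0$. Then $\iota(f(t))x=0$. If $t \neq 0$, the matrix $\iota(f(t))$ is invertible, which forces $x=0$, a contradiction. So $t=0$, and the fields $X_{A_i}$ are pointwise linearly independent. This gives $n$ of them, as required.

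\textbf{Remaining issues and the main obstacle.} The computation is routine, so the main care lies in the definitions. First, both maxima must be finite. They are, since each is bounded by $N$: $n$ pointwise independent vectors cannot exist in $\mathbb{R}^N$ for $n>N$. Also $n=0$ is always allowed, so both sets are nonempty. Second, in Step 1 one might worry that the $A_i$ are linearly dependent in $\mathfrak{s}$. This is excluded automatically, because any dependence among the $A_i$ would give a dependence among the fields. I expect the only genuine subtlety to be stating the formula $X_A(x)=\iota(A)x$ rigorously via the chain rule, using that $(d\varphi)_e=\iota$ and that evaluation at $x$ is linear. The first part of Theorem~\ref{t13} then follows by applying $\eta$ and the two identities with $\eta$ noted above.
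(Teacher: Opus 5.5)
Your proposal is correct and follows essentially the same route as the paper. The paper likewise first proves $X_A(x)=\iota(A)x$ (Lemma~\ref{l39}, via the chain rule through $B\mapsto Bx$). It then shows that pointwise independence of $X_{A_1},\dots,X_{A_n}$ is equivalent to invertibility of $\iota(\sum_i t_iA_i)$ for all $t\neq 0$, and hence to $n\le\rho^{(2)}_{\mathfrak s}(\mathfrak g,\iota)$ (Lemmas~\ref{l310} and~\ref{l311}), exactly as in your Steps 1 and 2.
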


\begin{Cor}\label{c37}
Let $\mathfrak g$ be a real reductive Lie algebra, let
$\iota:\mathfrak g\to\mathfrak{gl}(N,\mathbb C)$
be a faithful representation, and let
$\mathfrak g=\mathfrak k+\mathfrak p$
be a Cartan decomposition. Then there exist a simply connected Lie group $G$ and a Lie group homomorphism
$\varphi:G\to GL(N,\mathbb C)$
such that
$\iota=(d\varphi)_e$. Then the following holds:
$$
\rho_{G,\mathfrak p}(\mathbb{C}^{N}\backslash\{0\}, \varphi) 
=\rho_{G,\mathfrak p}(\mathbb{R}^{2N}\backslash\{0\}, \eta\circ \varphi) 
= \rho^{(2)}(\mathfrak{g}, \iota).
$$
\end{Cor}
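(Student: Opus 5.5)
The corollary has two parts: the existence of $G$ and $\varphi$, and then a chain of two equalities. I will treat them in that order, taking Theorem~\ref{t36} and the observations about $\eta$ from the previous subsection as given.

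\emph{Existence of $G$ and $\varphi$.} By Lie's third theorem there is a simply connected Lie group $G$ with $\operatorname{Lie}(G)=\mathfrak g$. The map $\iota:\mathfrak g\to\mathfrak{gl}(N,\mathbb C)$ is a Lie algebra homomorphism. Since $G$ is simply connected, $\iota$ integrates to a unique Lie group homomorphism $\varphi:G\to GL(N,\mathbb C)$ with $(d\varphi)_e=\iota$. I would point out that faithfulness of $\iota$ is not needed for this step. It only ensures that the Cartan decomposition $\mathfrak g=\mathfrak k+\mathfrak p$ attached to $\iota$ makes sense, as in Section~2.

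\emph{First equality.} This is the identity
\[
\rho_{G,\mathfrak{s}}(\mathbb{C}^N\backslash\{0\},\varphi)=\rho_{G,\mathfrak{s}}(\mathbb{R}^{2N}\backslash\{0\},\eta\circ\varphi)
\]
stated after the functoriality proposition, applied with $\mathfrak s=\mathfrak p$. To justify it, I would observe that the real-linear identification $\mathbb C^N\cong\mathbb R^{2N}$, $x+iy\mapsto(x,y)$, is a diffeomorphism. It is equivariant for $\varphi$ and $\eta\circ\varphi$, so it carries the fundamental vector fields of one action to those of the other. Pointwise linear independence is therefore preserved in both directions.

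\emph{Second equality.} I would apply Theorem~\ref{t36} to the real representation $\eta\circ\varphi:G\to GL(2N,\mathbb R)$ with $\mathfrak s=\mathfrak p$. This gives
\[
\rho_{G,\mathfrak p}(\mathbb{R}^{2N}\backslash\{0\},\eta\circ\varphi)=\rho^{(2)}_{\mathfrak p}(\mathfrak g,(d(\eta\circ\varphi))_e).
\]
By the remark preceding Theorem~\ref{t36}, the right-hand side equals $\rho^{(2)}_{\mathfrak p}(\mathfrak g,\iota)$. This holds because $d\eta$ is the realification, and for any $X\in\mathfrak{gl}(N,\mathbb C)$, $X$ is invertible if and only if $d\eta(X)$ is invertible. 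Finally, $\rho^{(2)}_{\mathfrak p}(\mathfrak g,\iota)=\rho^{(2)}(\mathfrak g,\iota)$, because $\mathfrak p$ is the $(-1)$-eigenspace of the Cartan decomposition attached to $\iota$.

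There is no real obstacle once Theorem~\ref{t36} is available. The only point requiring care is consistency: the $\mathfrak p$ used in the corollary must be the one coming from the self-adjointness of $\iota(\mathfrak g)$, which is the one in the definition of $\rho^{(2)}$. Everything else is bookkeeping.
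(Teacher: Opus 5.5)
Your proposal is correct and follows the same route as the paper. You integrate $\iota$ on a simply connected group, transfer between $\mathbb{C}^N\backslash\{0\}$ and $\mathbb{R}^{2N}\backslash\{0\}$ via $\eta$, and apply Theorem~\ref{t36} with $\mathfrak s=\mathfrak p$; along the way you also supply the two facts the paper calls ``clear'', namely the equivariance of $x+iy\mapsto(x,y)$ and the preservation of invertibility under realification.
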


\begin{Cor}
Under the assumptions of Corollary~\ref{c37}, let $n\in\mathbb N$. Then the following are equivalent.
\begin{description}
\item[(1)] $n \leq \rho^{(2)}(\mathfrak{g}, \iota)$,
\item[(2)] There exist $n$ fundamental vector fields on $\mathbb{C}^{N}\backslash\{0\}$ associated with $\varphi$ and arising from $\mathfrak p$ which are pointwise linearly independent.
\end{description}
\end{Cor}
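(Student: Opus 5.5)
The plan is to deduce the equivalence directly from Corollary~\ref{c37}, which identifies $\rho^{(2)}(\mathfrak g,\iota)$ with $\rho_{G,\mathfrak p}(\mathbb C^N\backslash\{0\},\varphi)$. What remains is to check that the statement ``$n\le \rho_{G,\mathfrak p}(\mathbb C^N\backslash\{0\},\varphi)$'' is equivalent to the existence of $n$ pointwise linearly independent $(G,\mathfrak p)$-fundamental vector fields. This is a purely formal property of the maximum in the definition of $\rho_{G,\mathfrak s}$, so the argument splits into two directions plus a remark that the maximum is attained.

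First I will note that the set over which the maximum is taken is nonempty and bounded above, so the maximum exists. It contains $n=0$ (the empty family). It is bounded by $2N=\dim_{\mathbb R}T_x(\mathbb C^N\backslash\{0\})$, since $n$ vectors linearly independent in a single tangent space force $n\le 2N$. Hence $m\coloneqq\rho_{G,\mathfrak p}(\mathbb C^N\backslash\{0\},\varphi)$ is a well-defined integer, and there are $A_1,\dots,A_m\in\mathfrak p$ whose fundamental vector fields $X_{A_1},\dots,X_{A_m}$ are linearly independent at every point.

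For (1)$\Rightarrow$(2): assume $n\le\rho^{(2)}(\mathfrak g,\iota)=m$, the equality coming from Corollary~\ref{c37}. I will take the first $n$ of the fields, $X_{A_1},\dots,X_{A_n}$. At each point these form a subfamily of a linearly independent family, so they are linearly independent. They are fundamental vector fields associated with $\varphi$ arising from $\mathfrak p$, which gives (2). For (2)$\Rightarrow$(1): if $n$ such fields exist, then $n$ lies in the set defining the maximum, so $n\le m=\rho^{(2)}(\mathfrak g,\iota)$, again by Corollary~\ref{c37}.

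There is no real obstacle here. The only points needing care are the existence and finiteness of the maximum, and the convention that ``fundamental vector fields associated with $\varphi$ arising from $\mathfrak p$'' means exactly the $(G,\mathfrak p)$-fundamental vector fields for the action $\varphi$ on $\mathbb C^N\backslash\{0\}$. With that reading, the corollary is an immediate reformulation of Corollary~\ref{c37}.
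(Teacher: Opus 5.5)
Your proposal is correct and follows the paper's route: the corollary is an immediate restatement of Corollary~\ref{c37} once $\rho_{G,\mathfrak p}(\mathbb C^N\backslash\{0\},\varphi)$ is unpacked as a maximum. Your explicit checks supply the details the paper leaves implicit: the maximum is attained (the set contains $0$ and is bounded by $2N$), and a subfamily of pointwise linearly independent fundamental vector fields remains pointwise linearly independent.
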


Therefore, $\rho^{(2)}(\mathfrak g,\iota)$ is interpreted in terms of the pointwise linear independence of fundamental vector fields.

We shall give a proof of Theorem~\ref{t36}. This follows from Lemmas~\ref{l310} and~\ref{l311} below.

\begin{Lem}\label{l39}
    Fix $x\in \mathbb{R}^N\backslash\{0\}$. Then, for any $A\in T_eG$,
    $$
    (d\varphi_x)_e(A) = (d\varphi)_e(A)x.
    $$
\end{Lem}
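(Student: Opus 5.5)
The plan is to factor $\varphi_x$ through a linear evaluation map and then apply the chain rule. Let $\mathrm{ev}_x : GL(N,\mathbb{R}) \to \mathbb{R}^N\backslash\{0\}$ be defined by $\mathrm{ev}_x(g) = gx$. Since $\varphi_x(g) = \varphi(g)x$, we have $\varphi_x = \mathrm{ev}_x \circ \varphi$.

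The chain rule then gives
\[
(d\varphi_x)_e(A) = (d\,\mathrm{ev}_x)_{I_N}\bigl((d\varphi)_e(A)\bigr).
\]
The map $\mathrm{ev}_x$ is the restriction to the open subset $GL(N,\mathbb{R}) \subset M_N(\mathbb{R})$ of the linear map $M_N(\mathbb{R}) \to \mathbb{R}^N$, $B \mapsto Bx$. Identify $T_{I_N}GL(N,\mathbb{R})$ with $M_N(\mathbb{R})$ and $T_x(\mathbb{R}^N\backslash\{0\})$ with $\mathbb{R}^N$. With these identifications, $(d\,\mathrm{ev}_x)_{I_N}(B) = Bx$, because the differential of a linear map is the map itself. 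Taking $B = (d\varphi)_e(A)$ yields the claimed identity.

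As an alternative check, write $A = \dot c(0)$ for the curve $c(t) = \exp(tA)$. Then
\[
(d\varphi_x)_e(A) = \frac{d}{dt}\Big|_{t=0}\varphi(\exp tA)\,x = \Bigl(\frac{d}{dt}\Big|_{t=0}\varphi(\exp tA)\Bigr)x = (d\varphi)_e(A)\,x.
\]
Here the derivative passes through multiplication by the fixed vector $x$ because that multiplication is linear.

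There is no real obstacle. The only point needing care is to make the tangent-space identifications explicit, which amounts to viewing $(d\varphi)_e(A)$ as a matrix in $\mathfrak{gl}(N,\mathbb{R})$.
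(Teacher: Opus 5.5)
Your proof is correct and follows the paper's argument: you factor $\varphi_x$ as the evaluation map $B \mapsto Bx$ composed with $\varphi$ and apply the chain rule at $e$. The only difference is cosmetic: the paper computes $(d\psi_x)_{I_N}$ explicitly in coordinates as the $N\times N^2$ block matrix with diagonal blocks $x^{\top}$, whereas you use the fact that the differential of (the restriction of) a linear map is the map itself.
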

\begin{proof}
    Fix $x\in \mathbb{R}^N\backslash\{0\}$, and define
    $
    \psi_x:GL(N, \mathbb{R}) \to \mathbb{R}^{N}\backslash\{0\}, B \mapsto Bx.
    $
    Since $\varphi_x = \psi_x\circ \varphi$, we have
    \begin{equation*}
\xymatrix{
T_eG\ar[r]^-{(d\varphi)_e}\ar[rd]_-{(d\varphi_x)_e}\ar@{}@<2.0ex>[rd]|{\circlearrowright}&T_{I_N}(GL(N, \mathbb{R}))\ar[d]^-{(d\psi_x)_{I_N}}\\
&T_x(\mathbb{R}^N\backslash\{0\})
}
\end{equation*}
Hence, for any $A\in T_eG$, we have
\begin{eqnarray*}
     (d\varphi_x)_e(A)
     &=& ((d\psi_x)_{I_N}\circ(d\varphi)_e)(A)
     = (d\psi_x)_{I_N}((d\varphi)_e(A)) .
\end{eqnarray*}
Here, we identify $GL(N,\mathbb{R})$ with an open subset of $\mathbb{R}^{N^2}$ via
\[
(a_{ij})_{1\le i,j\le N}
\longmapsto
(a_{11},a_{12},\dots,a_{1N},a_{21},\dots,a_{NN})^{\top}.
\]
With respect to this coordinate system, the matrix of $(d\psi_x)_{I_N}$ is the following $N\times N^2$ matrix:
$$
\begin{pmatrix}
   x^{\top} & 0 & 0&0\\
   0& x^{\top}&0&0\\
    0&0 & \ddots&\vdots\\
   0&0&\cdots&x^{\top}
\end{pmatrix}.
$$
Therefore,
$$
    (d\varphi_x)_e(A) = (d\varphi)_e(A)x.
    $$
\end{proof}

\begin{Lem}\label{l310}
   For $A_1, A_2,\dots,A_n \in \mathfrak{s}$, the following are equivalent.
\begin{description}
\item[(1)]The fundamental vector fields $X_1, X_2,\dots,X_n$ corresponding to $A_1, A_2,\dots,A_n$ are pointwise linearly independent on $\mathbb{R}^N\backslash\{0\}$.
\item[(2)]For each $(t_1, t_2,\dots,t_n) \in \mathbb{R}^n\backslash\{0\}$, $(d\varphi)_e(\sum_{i=1}^{n}t_iA_i)$ is invertible.
 \end{description}
\end{Lem}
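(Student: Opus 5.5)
The plan is to express the fundamental vector fields explicitly through Lemma~\ref{l39} and then reduce both conditions to a statement about kernels of matrices. The action on $\mathbb{R}^N\backslash\{0\}$ is $\sigma(g,x)=\varphi(g)x$, so $\sigma_x=\varphi_x$. Lemma~\ref{l39} then gives, for every $x\in\mathbb{R}^N\backslash\{0\}$,
\[
X_i(x)=(d\varphi_x)_e(A_i)=(d\varphi)_e(A_i)x .
\]
Since $(d\varphi)_e$ is $\mathbb{R}$-linear, for any $t=(t_1,\dots,t_n)\in\mathbb{R}^n$ we obtain
\[
\sum_{i=1}^n t_iX_i(x)=(d\varphi)_e\Bigl(\sum_{i=1}^n t_iA_i\Bigr)x .
\]
Both directions of the lemma will be read off from this identity. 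We regard $T_x(\mathbb{R}^N\backslash\{0\})$ as $\mathbb{R}^N$ in the usual way.

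For (2)$\Rightarrow$(1), fix $x\neq 0$ and suppose $\sum_i t_iX_i(x)=0$. If $t\neq0$, then by (2) the matrix $(d\varphi)_e(\sum_i t_iA_i)$ is invertible, so its value on $x$ is nonzero. This contradicts $\sum_i t_iX_i(x)=0$, hence $t=0$, and the vector fields $X_1,\dots,X_n$ are linearly independent at $x$.

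For (1)$\Rightarrow$(2), argue by contraposition. Suppose some $t\neq 0$ makes $B:=(d\varphi)_e(\sum_i t_iA_i)$ non-invertible. Then $B$ has a nonzero kernel vector $x\in\mathbb{R}^N\backslash\{0\}$, and the identity gives $\sum_i t_iX_i(x)=Bx=0$ with $t\neq0$. So the $X_i$ are linearly dependent at the point $x$, and (1) fails.

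There is no real obstacle here. The only point to watch is that the kernel vector produced in the contrapositive automatically lies in the manifold $\mathbb{R}^N\backslash\{0\}$, which holds because it is nonzero. Removing the origin is exactly what makes the correspondence between pointwise linear independence and invertibility exact.
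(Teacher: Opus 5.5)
Your proof is correct and follows essentially the same route as the paper: both rest on the identity $\sum_i t_iX_i(x)=(d\varphi)_e(\sum_i t_iA_i)x$ from Lemma~\ref{l39}, and your (2)$\Rightarrow$(1) argument is identical to the paper's. For (1)$\Rightarrow$(2) you argue by contraposition through a nonzero kernel vector, whereas the paper argues directly that the kernel is trivial; this is the same argument run in the other direction.
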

\begin{proof}
    First, we show (1)$\Rightarrow$(2). 
    By the pointwise linear independence of the fundamental vector fields corresponding to $A_1, A_2,\dots,A_n$, for each $x \in \mathbb{R}^{N}\backslash\{0\}$, $(d\varphi_x)_e(A_1), (d\varphi_x)_e(A_2),\dots,(d\varphi_x)_e(A_n)$ are linearly independent. Hence for each $t=(t_1, t_2,\dots,t_n) \in \mathbb{R}^{n}\backslash\{0\}$, we have $\sum_{i=1}^nt_i(d\varphi_x)_e(A_i) \neq 0$. Put $\tilde{A}_t \coloneqq \sum_{i=1}^nt_i(d\varphi_x)_e(A_i)$. Then by Lemma \ref{l39},
\begin{eqnarray*}
     \sum_{i=1}^nt_i(d\varphi_x)_e(A_i)
     &=& (d\varphi_x)_e(\sum_{i=1}^nt_iA_i) = \tilde{A}_tx \neq 0.
\end{eqnarray*}
Thus, considering
$
\tilde{A}_t:\mathbb{R}^N \to \mathbb{R}^N, x \mapsto \tilde{A}_tx,
$
we obtain $\textrm{Ker}\tilde{A}_t = \{0\}$, so $\tilde{A}_t$ is injective. Therefore, $\tilde{A}_t$ is invertible.

    Next, we show (2)$\Rightarrow$(1). For each $i \in \{ 1, 2,\dots,n\}$, let $X_i$ be the fundamental vector field corresponding to $A_i$. Fix $x\in \mathbb{R}^N\backslash\{0\}$. Suppose that, for $t = (t_1, t_2,\dots,t_n) \in \mathbb{R}^n$,
    $$
    \sum_{i=1}^nt_iX_i(x) = \sum_{i=1}^{n}t_i(d\varphi_x)_e(A_i) = 0.
    $$
    If $t\neq 0$, then
$(d\varphi)_e(\sum_{i=1}^n t_iA_i)$
is invertible. On the other hand,
\begin{align*}
(d\varphi)_e(\sum_{i=1}^n t_iA_i)x
&= (d\varphi_x)_e(\sum_{i=1}^n t_iA_i) \\
&= \sum_{i=1}^n t_i(d\varphi_x)_e(A_i)
=0,
\end{align*}
which is a contradiction. Hence, $t=0$.
\end{proof}

\begin{Lem}\label{l311}
    Let $n\in\mathbb N$. The following are equivalent.
    \begin{description}
\item[(1)]$n \leq \rho^{(2)}_\mathfrak{s}(\mathfrak{g}, \iota)$.
\item[(2)]There exist $A_1,\dots,A_n \in \mathfrak{s}$ such that for each $(t_1, \dots,t_n) \in \mathbb{R}^n\backslash\{0\}$, $(d\varphi)_e(\sum_{i=1}^nt_iA_i)$ is invertible.
 \end{description}
\end{Lem}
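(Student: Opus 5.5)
This lemma amounts to unpacking the definition of $\rho^{(2)}_\mathfrak{s}(\mathfrak{g},\iota)$ together with the correspondence between linear maps $f:\mathbb{R}^n\to\mathfrak{s}$ and $n$-tuples $(A_1,\dots,A_n)$ of elements of $\mathfrak{s}$, given by $A_i=f(e_i)$ and conversely $f(t)=\sum_i t_iA_i$. Under this correspondence, $\iota(f(t))=(d\varphi)_e(\sum_i t_iA_i)$ because $\iota=(d\varphi)_e$ is linear. So condition (2) says exactly that the linear map $f$ attached to $(A_1,\dots,A_n)$ satisfies the defining property of $\rho^{(2)}_\mathfrak{s}$ at level $n$.

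For (2)$\Rightarrow$(1), take the $A_i$ from (2) and set $f(t)=\sum_i t_iA_i$. Then $\iota(f(t))$ is invertible for every $t\neq 0$, so $n$ belongs to the set whose maximum defines $\rho^{(2)}_\mathfrak{s}(\mathfrak{g},\iota)$, and hence $n\le\rho^{(2)}_\mathfrak{s}(\mathfrak{g},\iota)$.

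For (1)$\Rightarrow$(2), set $m\coloneqq\rho^{(2)}_\mathfrak{s}(\mathfrak{g},\iota)\ge n$, and take a linear map $f:\mathbb{R}^m\to\mathfrak{s}$ realizing the maximum. The set of admissible dimensions is bounded: if $f(t)=0$ for some $t\neq0$, then $\iota(0)=0$ is not invertible, so for $m\ge 1$ an admissible $f$ must be injective and therefore $m\le\dim\mathfrak{s}$. Hence the maximum is attained. If $n=0$, condition (2) holds vacuously. Otherwise, restrict $f$ to $\mathbb{R}^n\subset\mathbb{R}^m$, embedded in the first $n$ coordinates. A nonzero vector of $\mathbb{R}^n$ is still nonzero in $\mathbb{R}^m$, so invertibility is preserved under the restriction. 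Then put $A_i=f(e_i)$ for $i=1,\dots,n$.

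The only point requiring care is this downward-closedness: one must check that the admissible set of $n$ is an initial segment of $\mathbb{N}$, so that ``$n\le$ the maximum'' is equivalent to ``$n$ is admissible.'' The restriction argument above handles this, and the rest is formal.
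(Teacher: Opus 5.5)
Your proof is correct and follows the paper's argument: both directions use the correspondence $A_i=f(e_i)$, $f(t)=\sum_i t_iA_i$, together with the linearity of $\iota=(d\varphi)_e$. The one difference is that you explicitly check that the admissible $n$ form an initial segment of $\mathbb{N}$ (by restricting an optimal $f:\mathbb{R}^m\to\mathfrak{s}$ to $\mathbb{R}^n$) and that the maximum is attained, whereas the paper's proof of (1)$\Rightarrow$(2) starts directly from a map $f:\mathbb{R}^n\to\mathfrak{s}$ without justifying this, so your extra step fills that small gap.
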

\begin{proof}
First, we show \((1)\Rightarrow(2)\).
Let
$f:\mathbb{R}^n\to \mathfrak{s}$
be an $\mathbb{R}$-linear map such that, for every
$v\in \mathbb{R}^n\backslash\{0\}$,
the matrix
$((d\varphi)_e\circ f)(v)$
is invertible. Let \(e_1,\dots,e_n\) be the standard basis of \(\mathbb{R}^n\), and put
$A_i:=f(e_i)\in \mathfrak{s}$ for each $i \in \{1,\dots,n\}$.
Then, for each
$(t_1,\dots,t_n)\in \mathbb{R}^n\backslash\{0\}$,
we have
\[
(d\varphi)_e(\sum_{i=1}^n t_iA_i)
=
(d\varphi)_e(\sum_{i=1}^n t_if(e_i))
=
(d\varphi)_e(f(\sum_{i=1}^n t_ie_i)).
\]
Since
$\sum_{i=1}^n t_ie_i\neq 0$,
it follows that
$(d\varphi)_e(\sum_{i=1}^n t_iA_i)$
is invertible.

Next, we show \((2)\Rightarrow(1)\).
Assume that there exist \(A_1,\dots,A_n\in \mathfrak{s}\) such that, for each
$(t_1,\dots,t_n)\in \mathbb{R}^n\backslash\{0\}$,
the matrix
$(d\varphi)_e(\sum_{i=1}^n t_iA_i)$
is invertible.
Define an $\mathbb{R}$-linear map
$f:\mathbb{R}^n\to \mathfrak{s}$
by
$f(e_i)=A_i
$ for each $i \in \{1,\dots,n\}$.
Then condition (1) holds.
\end{proof}

\begin{proof}[Proof of Theorem~\ref{t14}]
Let $\mathfrak{so}(N,N)=\mathfrak{k}+\mathfrak{p}$ be a Cartan decomposition, and let
$\varphi:SO(N,N)\to GL(2N,\mathbb{R})$
be the standard representation. Set $\iota \coloneqq (d\varphi)_e$. Then, 
\begin{align*}
\rho(N)
&\overset{\cite{KannakaTojo26}}{=}
\rho^{(2)}(\mathfrak{so}(N, N), \iota)
\overset{\text{Thm.}\ref{t36}}{=}
\rho_{SO(N, N), \mathfrak{p}}(\mathbb{R}^{2N}\backslash\{0\}, \varphi)\\
&\coloneqq \max\left\{
n\in\mathbb N \,\middle|\,
\begin{array}{l}
\text{there exist } n \textrm{ }(SO(N, N), \mathfrak{p}) \text{-fundamental vector fields on }\mathbb{R}^{2N}\backslash\{0\}\\
\text{which are linearly independent at every point}
\end{array}
\right\}.
\end{align*}

Let
$\varphi':O(N)\to GL(N,\mathbb{R})$
be the standard representation, and set $\iota' \coloneqq (d\varphi')_e$. Then, 
\begin{align*}
\rho(N)-1
&\overset{\cite{AndreiUwe11}}{=}
\max\left\{
n\in\mathbb N \,\middle|\,
\begin{array}{l}
\text{there exists an $\mathbb{R}$-linear map } f:\mathbb{R}^n \to \mathfrak{o}(N) \text{ such that}\\
\text{for every } v \in \mathbb{R}^n \backslash \{0\},\ \iota'(f(v)) \text{ is invertible}
\end{array}
\right\}\\
&\eqqcolon
\rho^{(2)}_{O(N), \mathfrak{o}(N)}(O(N), \iota')
\overset{\text{Thm.}\ref{t36}}{=}
\rho_{O(N), \mathfrak{o}(N)}(\mathbb{R}^{N}\backslash\{0\}, \varphi')\\
&\coloneqq \max\left\{
n\in\mathbb N \,\middle|\,
\begin{array}{l}
\text{there exist } n \textrm{ }(O(N), \mathfrak{o}(N)) \text{-fundamental vector fields on }\mathbb{R}^{N}\backslash\{0\}\\
\text{which are linearly independent at every point}
\end{array}
\right\}.
\end{align*}
Therefore, the assertion of the theorem follows.
\end{proof}

\section{Hurwitz--Radon numbers for $G$-manifolds and their affine connections}\phantomsection
In this section, we define $\rho^{\pm}_{G,\mathfrak{s}}(M,\sigma,\nabla)$ for G-manifolds and their affine connections, and show that $\rho^{-}_{G,\mathfrak{s}}(M,\sigma,\nabla)$ is an extension of $\rho^{(1)}(\mathfrak g,\iota)$ defined by Kannaka--Tojo. This provides a geometric interpretation of $\rho^{(1)}(\mathfrak g,\iota)$. Furthermore, we show a relationship between $\rho^{+}_{G,\mathfrak{s}}(M,\sigma,\nabla)$ and Clifford structures.

\subsection{Definition and properties of $\rho_{G, \mathfrak{s}}(M, \sigma, \nabla)$}\phantomsection
Let $G$ be a Lie group, let $\mathfrak{s}\subset\mathfrak g$ be a vector subspace, let $M$ be a $G$-manifold with group action $\sigma$, and let
$\nabla:\mathfrak X(M)\times\mathfrak X(M)\to\mathfrak X(M)$
be an affine connection on $M$.
We set
\[
\nabla^{M}\mathfrak{s}
\coloneqq
\{\nabla X_A \mid A\in \mathfrak{s}\}
\subset \operatorname{End}_{C^\infty(M)}(\mathfrak X(M)),
\]
where $X_A$ denotes the fundamental vector field on $M$ corresponding to $A\in\mathfrak g$.

Motivated by the proof of Fact~\ref{f23} due to Kannaka--Tojo, we introduce the following integer for a $G$-manifold equipped with an affine connection.
\begin{Def}
    For each $\epsilon\in\{\pm1\}$, we define an integer
$\rho^\epsilon_{G,\mathfrak{s}}(M,\sigma,\nabla)$
as the largest $n\in\mathbb N$ for which there exists an $\mathbb R$-algebra homomorphism
$f:Cl_n^\epsilon\to \operatorname{End}_{C^\infty(M)}(\mathfrak X(M))$ 
such that
\begin{itemize}
\item $f(e_i)\in\nabla^M \mathfrak{s}$ for any $i \in \{1,\dots,n\}$.
\end{itemize}
Here, the notation $+$ (resp. $-$) corresponds to $\epsilon=1$ (resp. $\epsilon=-1$), and
$Cl_n^+=Cl_n$, $Cl_n^-=Cl_{0,n}$.
\end{Def}

First, we discuss the functoriality of $\rho_{G,\mathfrak{s}}^{\pm}$.
\begin{Def}
We define a category $G\textrm{-}\mathbf{MfdConn}$ as follows.

\begin{itemize}
\item An object of $G\textrm{-}\mathbf{MfdConn}$ is a triple $(M,\sigma,\nabla)$, where $M$ is a smooth manifold,
$\sigma:G\times M\to M$
is a smooth $G$-action, and $\nabla$ is an affine connection on $M$.

\item A morphism
$\varphi:(M,\sigma,\nabla)\to (N,\tau,\nabla')$
is a smooth map $\varphi:M\to N$ satisfying the following conditions:
\begin{enumerate}
\item $\varphi$ is an immersion;
\item $\varphi$ is $G$-equivariant;
\item for each $p\in M$, $v\in T_pM$, and $A\in \mathfrak{s}$,
\[
(d\varphi)_p(\nabla_vX_A)
=
\nabla'_{(d\varphi)_p(v)}Y_A.
\]
Here, $X_A$ and $Y_A$ denote the fundamental vector fields corresponding to $A$ with respect to $\sigma$ and $\tau$, respectively.
\end{enumerate}
\end{itemize}
\end{Def}

\begin{Prop}
$\rho_{G,\mathfrak{s}}^{\pm}$ are contravariant functors from $G\textrm{-}\mathbf{MfdConn}$ to the partially ordered set  $(\mathbb{Z}_{\geq 0},\leq)$.
\end{Prop}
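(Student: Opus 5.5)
The plan is to show that a morphism reverses the inequality. Concretely, let $\varphi:(M,\sigma,\nabla)\to(N,\tau,\nabla')$ be a morphism in $G\textrm{-}\mathbf{MfdConn}$. I will show that every $n$ admissible for $(N,\tau,\nabla')$ in the definition of $\rho^\epsilon_{G,\mathfrak s}$ is also admissible for $(M,\sigma,\nabla)$. This gives $\rho^\epsilon_{G,\mathfrak s}(N,\tau,\nabla')\le\rho^\epsilon_{G,\mathfrak s}(M,\sigma,\nabla)$. Composition and identities are automatic, since the target is a poset.

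So fix an $\mathbb R$-algebra homomorphism $f:Cl_n^\epsilon\to\operatorname{End}_{C^\infty(N)}(\mathfrak X(N))$ with $f(e_i)=\nabla'Y_{A_i}$ for some $A_1,\dots,A_n\in\mathfrak s$. I take algebra homomorphisms to be unital, so $f(1)=\mathrm{id}$. The defining relations of $Cl_n^\epsilon$ are $e_ie_j+e_je_i=c_{ij}\cdot 1$, with constants $c_{ij}$ ($c_{ij}=0$ for $i\neq j$, and $c_{ii}=\mp 2$ according to the sign convention). Hence the operators $\nabla'Y_{A_i}$ satisfy
\[
\nabla'Y_{A_i}\circ\nabla'Y_{A_j}+\nabla'Y_{A_j}\circ\nabla'Y_{A_i}=c_{ij}\,\mathrm{id}.
\]
By the universal property of the Clifford algebra, it is enough to show that the $C^\infty(M)$-linear endomorphisms $B_i\coloneqq\nabla X_{A_i}$, given by $v\mapsto\nabla_vX_{A_i}$, satisfy the same relations. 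The universal property then yields a homomorphism $f':Cl_n^\epsilon\to\operatorname{End}_{C^\infty(M)}(\mathfrak X(M))$ with $f'(e_i)=\nabla X_{A_i}\in\nabla^M\mathfrak s$.

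The key step is to read condition 3 of a morphism pointwise as an intertwining relation. Since $\nabla X_A$ is tensorial in the direction argument, it defines an endomorphism $(\nabla X_A)_p$ of $T_pM$, and condition 3 says exactly
\[
(d\varphi)_p\circ(\nabla X_{A})_p=(\nabla'Y_{A})_{\varphi(p)}\circ(d\varphi)_p .
\]
Applying this twice gives, for $v\in T_pM$,
\[
(d\varphi)_p\bigl((B_iB_j+B_jB_i)v\bigr)=\bigl(\nabla'Y_{A_i}\nabla'Y_{A_j}+\nabla'Y_{A_j}\nabla'Y_{A_i}\bigr)(d\varphi)_p v=c_{ij}(d\varphi)_pv=(d\varphi)_p(c_{ij}v).
\]
Because $\varphi$ is an immersion, $(d\varphi)_p$ is injective, so $(B_iB_j+B_jB_i)v=c_{ij}v$ for all $p$ and all $v$. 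These are exactly the Clifford relations on $M$.

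The main point of care is the identification $\operatorname{End}_{C^\infty(M)}(\mathfrak X(M))\cong\Gamma(\operatorname{End}(TM))$, which lets us check relations pointwise. The other point is that $f$ is unital, so that the relation involves $\mathrm{id}$ rather than some idempotent. If non-unital homomorphisms were allowed, I would instead restrict attention to the image of $f(1)$, but I expect the unital convention is intended. The $G$-equivariance of $\varphi$ is not even needed beyond what is already built into condition 3. The case $n=0$ is trivial.
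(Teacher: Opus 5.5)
Your proposal is correct and follows essentially the same argument as the paper: you send each $e_i$ to $\nabla X_{A_i}$, apply condition 3 twice to intertwine the anticommutator on $M$ with the one on $N$ via $(d\varphi)_p$, and conclude the Clifford relations on $M$. Your explicit appeals to the injectivity of $(d\varphi)_p$ (from $\varphi$ being an immersion) and to the universal property of $Cl_n^\epsilon$ make precise two steps that the paper leaves implicit.
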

\begin{proof}
Fix $\epsilon\in\{\pm1\}$. First, we show that $\rho_{G,\mathfrak{s}}^{\epsilon}$ is well-defined as an assignment on morphisms. Let
$(M,\sigma,\nabla),\ (N,\tau,\nabla')\in \mathrm{Ob}(G\textrm{-}\mathbf{MfdConn})$,
and let
$\varphi:(M,\sigma,\nabla)\to (N,\tau,\nabla')$
be a morphism. It suffices to show that
$\rho^{\epsilon}_{G,\mathfrak{s}}(N,\tau,\nabla')\leq \rho^{\epsilon}_{G,\mathfrak{s}}(M,\sigma,\nabla)$.
Put
$n\coloneqq \rho^{\epsilon}_{G,\mathfrak{s}}(N,\tau,\nabla')$.
Then there exists an $\mathbb{R}$-algebra homomorphism
$f':Cl_n^{\epsilon}\to \mathrm{End}_{C^\infty(N)}(\mathfrak{X}(N))$
such that, for each $i\in\{1,\dots,n\}$,
$f'(e_i)\in \nabla'^N \mathfrak{s}$.
Hence, for each $i\in\{1,\dots,n\}$, there exists $A_i\in \mathfrak{s}$ such that
$f'(e_i)=\nabla'Y_{A_i}$,
where $Y_{A_i}$ is the fundamental vector field corresponding to $A_i$ with respect to $\tau$.
Define an $\mathbb{R}$-linear map
$f:Cl_n^{\epsilon}\to \mathrm{End}_{C^\infty(M)}(\mathfrak{X}(M))$
by
$f(e_i)\coloneqq \nabla X_{A_i}$ for each $i \in \{1,\dots,n\}$,
where $X_{A_i}$ is the fundamental vector field corresponding to $A_i$ with respect to $\sigma$.
Then, for all $i,j\in\{1,\dots,n\}$, $p\in M$, and $v\in T_pM$,
\begin{align*}
(d\varphi)_p\bigl((f(e_i)f(e_j)+f(e_j)f(e_i))(v)\bigr)
&=(d\varphi)_p\bigl((\nabla X_{A_i}\circ\nabla X_{A_j}
+\nabla X_{A_j}\circ\nabla X_{A_i})(v)\bigr)\\
&=(d\varphi)_p\bigl(\nabla_{\nabla_vX_{A_j}}X_{A_i}
+\nabla_{\nabla_vX_{A_i}}X_{A_j}\bigr)\\
&=(d\varphi)_p(\nabla_{\nabla_vX_{A_j}}X_{A_i})
+(d\varphi)_p(\nabla_{\nabla_vX_{A_i}}X_{A_j})\\
&=\nabla'_{(d\varphi)_p(\nabla_vX_{A_j})}Y_{A_i}
+\nabla'_{(d\varphi)_p(\nabla_vX_{A_i})}Y_{A_j}\\
&=(\nabla'Y_{A_i}\circ\nabla'Y_{A_j}
+\nabla'Y_{A_j}\circ\nabla'Y_{A_i})((d\varphi)_p(v))\\
&= -\epsilon\,2\delta_{ij}(d\varphi)_p(v).
\end{align*}
Therefore,
\[
f(e_i)f(e_j)+f(e_j)f(e_i)= -\epsilon\,2\delta_{ij}\operatorname{id}_{TM}.
\]
Hence, $f$ is an $\mathbb{R}$-algebra homomorphism. Thus,
$\rho^{\epsilon}_{G,\mathfrak{s}}(N,\tau,\nabla')\leq \rho^{\epsilon}_{G,\mathfrak{s}}(M,\sigma,\nabla)$.
It is clear that $\rho_{G,\mathfrak{s}}^{\epsilon}$ satisfies compatibility with identities and compositions, and hence is a contravariant functor from $G\textrm{-}\mathbf{MfdConn}$ to $(\mathbb{Z}_{\geq 0} ,\leq)$.
\end{proof}

Let
$F: \mathbb{C}^{N}\backslash\{0\} \to \mathbb{R}^{2N}\backslash\{0\}, 
x + iy \mapsto
\begin{pmatrix}
x\\
y
\end{pmatrix}$, 
let $\eta : GL(N, \mathbb{C}) \to GL(2N, \mathbb{R}),
A + iB \mapsto
\begin{pmatrix}
A & -B \\
B & A
\end{pmatrix}$, 
and let $\varphi:G\to GL(N,\mathbb{C})$ be a Lie group homomorphism, and let $\nabla$ be the standard flat affine connection on $\mathbb{C}^{N}\backslash\{0\}$. We define
\[
F_*\nabla:\mathfrak{X}(\mathbb{R}^{2N}\backslash\{0\})\times \mathfrak{X}(\mathbb{R}^{2N}\backslash\{0\})
\to
\mathfrak{X}(\mathbb{R}^{2N}\backslash\{0\}), 
(X, Y) \mapsto F_*\bigl(\nabla_{F_*^{-1}X}(F_*^{-1}Y)\bigr).
\]
Here, for $X\in\mathfrak{X}(\mathbb{C}^{N}\backslash\{0\})$ and $Y\in\mathfrak{X}(\mathbb{R}^{2N}\backslash\{0\})$, we define
\[
F_*X:\mathbb{R}^{2N}\backslash\{0\}\to T(\mathbb{R}^{2N}\backslash\{0\}), 
x\mapsto (dF)_{F^{-1}(x)}(X_{F^{-1}(x)}),
\]
and
\[
F_*^{-1}Y:\mathbb{C}^{N}\backslash\{0\}\to T(\mathbb{C}^{N}\backslash\{0\}), 
z\mapsto (dF^{-1})_{F(z)}(Y_{F(z)}).
\]
Then $F_*\nabla$ is the standard flat affine connection on $\mathbb{R}^{2N}\backslash\{0\}$.
Moreover, the following clearly holds:
\[
\rho^{\pm}_{G,\mathfrak{s}}(\mathbb{C}^N\backslash\{0\},\varphi,\nabla)
=
\rho^{\pm}_{G,\mathfrak{s}}(\mathbb{R}^{2N}\backslash\{0\},\eta\circ\varphi,F_*\nabla).
\]

\subsection{Geometric interpretation of $\rho^{(1)}(\mathfrak{g}, \iota)$}\phantomsection
Let $G$ be a Lie group, let $\mathfrak{s} \subset \mathfrak{g}$ be a vector subspace, and let $\varphi:G \to GL(N, \mathbb{R})$ be a Lie group homomorphism. Put $\iota \coloneqq (d\varphi)_e$. Below, as an affine connection on $\mathbb{R}^N\backslash\{0\}$, consider
$$
\nabla:\mathfrak{X}(\mathbb{R}^N\backslash\{0\}) \times \mathfrak{X}(\mathbb{R}^N\backslash\{0\}) \to \mathfrak{X}(\mathbb{R}^N\backslash\{0\}),
(X,\sum_{i=1}^{N}f_i\frac{\partial}{\partial x_i})\mapsto \sum_{k=1}^{N}Xf_k\frac{\partial}{\partial x_k}.
$$

As one of the main results, we obtain the following, from which the second part of Theorem~\ref{t13} follows immediately.
\begin{Thm}\label{t44}
The following holds:
    $$
    \rho^{-}_{G, \mathfrak{s}}(\mathbb{R}^N\backslash\{0\}, \varphi, \nabla) = \rho^{(1)}_\mathfrak{s}(\mathfrak{g}, \iota).
    $$
\end{Thm}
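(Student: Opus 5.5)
Key computation: for the flat connection on $\mathbb{R}^N\backslash\{0\}$, the fundamental vector field of $A\in\mathfrak g$ is $X_A(x)=\iota(A)x$ by Lemma~\ref{l39}, i.e.\ a linear vector field with components $(X_A)_k=\sum_j \iota(A)_{kj}x_j$. Hence for any $v\in T_x(\mathbb{R}^N\backslash\{0\})\cong\mathbb{R}^N$ we get $\nabla_v X_A=\iota(A)v$. Thus $\nabla X_A\in\operatorname{End}_{C^\infty}(\mathfrak X(\mathbb{R}^N\backslash\{0\}))$ is the constant endomorphism field given by the matrix $\iota(A)$. This identifies $\nabla^M\mathfrak{s}$ with $\iota(\mathfrak{s})$, viewed as constant endomorphism fields. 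I would state and prove this as a preliminary lemma, the analogue of Lemma~\ref{l39}.

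With this in hand, the proof is two inequalities, in the spirit of Lemmas~\ref{l310} and~\ref{l311}. First, suppose $n\le\rho^{(1)}_{\mathfrak s}(\mathfrak g,\iota)$, witnessed by $f:\mathbb{R}^n\to\mathfrak s$ with $\iota(f(v))^2=\|v\|^2I_N$. Put $A_i=f(e_i)$. Polarizing $\iota(f(v))^2=\|v\|^2 I_N$ (expand $v=\sum t_ie_i$ and compare coefficients of $t_it_j$) gives
\[
\iota(A_i)\iota(A_j)+\iota(A_j)\iota(A_i)=2\delta_{ij}I_N .
\]
These are exactly the defining relations of $Cl^-_n=Cl_{0,n}$ in the sign convention forced by the preceding functoriality proof, namely $f(e_i)f(e_j)+f(e_j)f(e_i)=-\epsilon\,2\delta_{ij}$ with $\epsilon=-1$. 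By the universal property of the Clifford algebra, $e_i\mapsto\nabla X_{A_i}$ therefore extends to an $\mathbb{R}$-algebra homomorphism into $\operatorname{End}_{C^\infty}(\mathfrak X)$. Composition of $\nabla X_{A_i}$ and $\nabla X_{A_j}$ corresponds to the matrix product by the preliminary lemma.

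Conversely, suppose $f:Cl^-_n\to\operatorname{End}_{C^\infty}(\mathfrak X)$ is a homomorphism with $f(e_i)=\nabla X_{A_i}$ for some $A_i\in\mathfrak s$. Evaluating at any point, the preliminary lemma gives $\iota(A_i)\iota(A_j)+\iota(A_j)\iota(A_i)=2\delta_{ij}I_N$. Define the linear map $F:\mathbb{R}^n\to\mathfrak s$ by $e_i\mapsto A_i$. Expanding the square, $\iota(F(v))^2=\sum_{i,j}t_it_j\iota(A_i)\iota(A_j)=\|v\|^2I_N$, so $n\le\rho^{(1)}_{\mathfrak s}(\mathfrak g,\iota)$. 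Combining both directions gives equality. A degenerate case needs attention: $N=0$ does not occur, so $I_N\neq0$ and no issue arises with $n=0$.

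The main obstacle is pinning down the sign convention for $Cl_{0,n}$, namely that $e_i^2=+1$ is what corresponds to $\epsilon=-1$. I would take it directly from the relation $-\epsilon\,2\delta_{ij}$ used in the functoriality proof, which confirms that $Cl^-_n$ matches Kannaka--Tojo's condition $\iota(f(v))^2=+\|v\|^2I_N$. The other point needing care is the identification $\nabla_vX_A=\iota(A)v$ as an equality of $C^\infty$-linear endomorphisms, not merely pointwise. This is immediate since the components of $X_A$ are linear functions.
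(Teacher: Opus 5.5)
Your proposal is correct and follows essentially the same route as the paper. Your preliminary lemma, that $\nabla X_A$ is the constant endomorphism field $\iota(A)$, is Lemma~\ref{l49}, with Lemmas~\ref{l410} and~\ref{l411} as the coordinate form of ``composition is matrix product''; your polarization step is Lemma~\ref{l48}, and your universal-property step is Lemma~\ref{l412}. The only differences are presentational: you phrase the identification coordinate-free, and you invoke the universal property of $Cl_{0,n}$ explicitly where the paper simply defines $f$ on the generators.
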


By Theorem~\ref{t44}, we obtain the following.

\begin{Cor}\label{c45}
Let $\mathfrak g$ be a real reductive Lie algebra, let
$\iota:\mathfrak g\to\mathfrak{gl}(N,\mathbb C)$
be a faithful representation, and let
$\mathfrak g=\mathfrak k+\mathfrak p$
be a Cartan decomposition. Then there exist a simply connected Lie group $G$ and a Lie group homomorphism
$\varphi:G\to GL(N,\mathbb C)$
such that
$\iota=(d\varphi)_e$. Let $\nabla$ be the standard flat affine connection on $\mathbb{C}^{N}\backslash\{0\}$. Then the following holds:
$$
\rho^{-}_{G,\mathfrak p}(\mathbb{C}^{N}\backslash\{0\},\varphi,\nabla)
=
\rho^{-}_{G,\mathfrak p}(\mathbb{R}^{2N}\backslash\{0\},\eta\circ\varphi,F_*\nabla)
=
\rho^{(1)}(\mathfrak g,\iota).
$$
\end{Cor}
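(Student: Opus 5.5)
Corollary~\ref{c45} follows from Theorem~\ref{t44} together with the complex-to-real reduction stated at the end of the previous subsection. The plan has three steps.

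\textbf{Step 1: existence of $G$ and $\varphi$.} Since $\iota:\mathfrak g\to\mathfrak{gl}(N,\mathbb C)$ is a Lie algebra homomorphism, Lie's second theorem applies. Take $G$ to be the simply connected Lie group with Lie algebra $\mathfrak g$ (Lie's third theorem). Then $\iota$ integrates to a unique Lie group homomorphism $\varphi:G\to GL(N,\mathbb C)$ with $(d\varphi)_e=\iota$.

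\textbf{Step 2: first equality.} The identity
\[
\rho^{-}_{G,\mathfrak p}(\mathbb C^N\backslash\{0\},\varphi,\nabla)=\rho^{-}_{G,\mathfrak p}(\mathbb R^{2N}\backslash\{0\},\eta\circ\varphi,F_*\nabla)
\]
is the relation recorded after the functoriality proposition. If one wants to justify it rather than cite it, the reason is that $F$ is an $\mathbb R$-linear diffeomorphism intertwining the two actions, $F(\varphi(g)z)=\eta(\varphi(g))F(z)$. It therefore carries fundamental vector fields to fundamental vector fields, $F_*X_A=Y_A$. By the definition of $F_*\nabla$, conjugation by $F_*$ gives an algebra isomorphism of the endomorphism rings sending $\nabla X_A$ to $(F_*\nabla)Y_A$. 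Clifford homomorphisms with generators in $\nabla^M\mathfrak p$ thus correspond bijectively.

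\textbf{Step 3: second equality.} Apply Theorem~\ref{t44} with the real representation $\eta\circ\varphi:G\to GL(2N,\mathbb R)$ and $\mathfrak s=\mathfrak p$. Observe that $F_*\nabla$ is exactly the standard flat connection used there. This gives
\[
\rho^{-}_{G,\mathfrak p}(\mathbb R^{2N}\backslash\{0\},\eta\circ\varphi,F_*\nabla)=\rho^{(1)}_{\mathfrak p}(\mathfrak g,(d(\eta\circ\varphi))_e).
\]
Next, use the earlier remark $\rho^{(i)}_{\mathfrak s}(\mathfrak g,(d(\eta\circ\varphi))_e)=\rho^{(i)}_{\mathfrak s}(\mathfrak g,(d\varphi)_e)$. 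This holds because $\eta$ is an injective $\mathbb R$-algebra homomorphism with $\eta(I_N)=I_{2N}$, so $(d\eta)(\iota(f(v)))^2=\|v\|^2I_{2N}$ iff $\iota(f(v))^2=\|v\|^2I_N$. Finally, when $\mathfrak s=\mathfrak p$ comes from the Cartan decomposition attached to $\iota$, we have $\rho^{(1)}_{\mathfrak p}(\mathfrak g,\iota)=\rho^{(1)}(\mathfrak g,\iota)$ by definition.

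\textbf{Expected subtleties.} All substantive work sits in Theorem~\ref{t44}; the only points needing care here are the following.
\begin{itemize}
\item $\varphi$ need not be injective, but faithfulness of $\iota$ is only needed for the Kannaka--Tojo setting and plays no role in Theorem~\ref{t44}.
\item The Cartan decomposition must be the one associated with the self-adjointness of $\iota(\mathfrak g)$, so that $\rho^{(1)}_{\mathfrak p}$ matches Kannaka--Tojo's $\rho^{(1)}$.
\end{itemize}
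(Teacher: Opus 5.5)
Your proposal is correct and follows the same route as the paper. The paper obtains this corollary directly from Theorem~\ref{t44} applied to $\eta\circ\varphi$, combined with the two ``clearly holds'' transfer remarks (for $\rho^{-}$ under $F$ and for $\rho^{(1)}_{\mathfrak s}$ under $\eta$) and the identification $\rho^{(1)}_{\mathfrak p}(\mathfrak g,\iota)=\rho^{(1)}(\mathfrak g,\iota)$. Your extra justifications only make explicit what the paper leaves as evident: the equivariance $F(\varphi(g)z)=\eta(\varphi(g))F(z)$, the injectivity of the realification map, and Lie's theorems for the existence of $G$ and $\varphi$.
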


\begin{Cor}
Under the assumptions of Corollary~\ref{c45}, let $n\in\mathbb N$. Then the following are equivalent.
\begin{description}
\item[(1)] $n\leq \rho^{(1)}(\mathfrak g,\iota)$.
\item[(2)] There exists an $\mathbb R$-algebra homomorphism
$
f:Cl_{0,n}\to
\operatorname{End}_{C^\infty(\mathbb{C}^{N}\backslash\{0\})}
(\mathfrak X(\mathbb{C}^{N}\backslash\{0\}))
$
such that $f(e_i)\in\nabla\mathfrak p$ for each $i\in\{1,\dots,n\}$.
\end{description}
\end{Cor}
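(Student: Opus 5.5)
The statement to prove is the last corollary: under the assumptions of Corollary~\ref{c45}, $n\le\rho^{(1)}(\mathfrak g,\iota)$ holds if and only if there is an algebra homomorphism $f:Cl_{0,n}\to\operatorname{End}_{C^\infty}(\mathfrak X(\mathbb C^N\backslash\{0\}))$ with $f(e_i)\in\nabla\mathfrak p$. I would reduce it to Corollary~\ref{c45}, which identifies $\rho^{-}_{G,\mathfrak p}(\mathbb C^N\backslash\{0\},\varphi,\nabla)$ with $\rho^{(1)}(\mathfrak g,\iota)$. Then (1) reads $n\le \rho^{-}_{G,\mathfrak p}(\mathbb C^N\backslash\{0\},\varphi,\nabla)$. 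The only thing left is to check that this is equivalent to (2), using the definition of $\rho^{-}$ as a maximum.

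For (2)$\Rightarrow$(1): an $f$ as in (2) is exactly a witness in the definition of $\rho^-_{G,\mathfrak p}$ for the value $n$. Hence $n\le\rho^-_{G,\mathfrak p}=\rho^{(1)}(\mathfrak g,\iota)$.

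For (1)$\Rightarrow$(2): put $m\coloneqq\rho^{(1)}(\mathfrak g,\iota)\ge n$. By the definition of $\rho^-_{G,\mathfrak p}$ there is a homomorphism $f':Cl_{0,m}\to\operatorname{End}(\mathfrak X)$ with $f'(e_i)=\nabla X_{A_i}$ and $A_i\in\mathfrak p$. I would compose it with the canonical algebra map $Cl_{0,n}\to Cl_{0,m}$, $e_i\mapsto e_i$ for $i\le n$. This map is well defined because the defining relations $e_ie_j+e_je_i=2\delta_{ij}$ are preserved. The composite sends each $e_i$ into $\nabla\mathfrak p$, which gives (2).

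The case $n=0$ is trivial: $Cl_{0,0}=\mathbb R$, the unit map gives (2), and (1) holds automatically.

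The main point needing care is monotonicity. The maximum in the definition of $\rho^-$ must be attained, and having a witness for $m$ must imply having one for every $n\le m$; the restriction argument above provides the latter. Finiteness of the maximum is already part of Corollary~\ref{c45}, via Theorem~\ref{t44}, so I would just cite it.

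If one instead wanted to reprove Theorem~\ref{t44} itself, the key computation is as follows. For the flat connection, $\nabla_v X_A=\iota(A)v$ by Lemma~\ref{l39}. So $\nabla X_{A_i}\circ\nabla X_{A_j}$ acts as $\iota(A_j)\iota(A_i)$ pointwise. The Clifford relations in $Cl_{0,n}$ then become $\iota(A_i)\iota(A_j)+\iota(A_j)\iota(A_i)=2\delta_{ij}I$, which is equivalent to $\iota(f(v))^2=\|v\|^2I$ after polarization.
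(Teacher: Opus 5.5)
Your proposal is correct and follows essentially the paper's route: the corollary is Corollary~\ref{c45} (via Theorem~\ref{t44}, whose Lemmas~\ref{l48}, \ref{l411}, \ref{l412} are exactly per-$n$ equivalences) unpacked through the definition of $\rho^-$, and your restriction along $Cl_{0,n}\to Cl_{0,m}$ makes explicit the downward-closedness that the paper uses tacitly in Lemma~\ref{l412}. One harmless slip in your last paragraph: since $(\nabla X_{A_i}\circ\nabla X_{A_j})(v)=\nabla_{\nabla_v X_{A_j}}X_{A_i}$, the composite acts as $\iota(A_i)\iota(A_j)$ rather than $\iota(A_j)\iota(A_i)$, which does not affect the symmetric anticommutator.
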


Therefore, $\rho^{(1)}(\mathfrak g,\iota)$ can be interpreted in the framework of $G$-manifolds and their affine connections.

Moreover, by Theorem~\ref{t36}, we obtain the following.
\begin{Cor}
    Let $\varphi:G \to GL(N, \mathbb{R})$ be a Lie group homomorphism, and let $\nabla$ be the standard flat affine connection on $\mathbb{R}^N\backslash\{0\}$. Then
    $$
    \rho^{\pm}_{G, \mathfrak{s}}(\mathbb{R}^N\backslash\{0\}, \varphi, \nabla) \leq \rho_{G, \mathfrak{s}}(\mathbb{R}^N\backslash\{0\}, \varphi).
    $$
\end{Cor}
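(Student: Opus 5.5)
The plan is to reduce both inequalities to Lemma~\ref{l311} and Theorem~\ref{t36}. The reduction rests on computing $\nabla X_A$ explicitly for the standard flat connection. Fix $\epsilon\in\{\pm1\}$ and put $n\coloneqq\rho^{\epsilon}_{G,\mathfrak s}(\mathbb R^N\backslash\{0\},\varphi,\nabla)$. If $n=0$ there is nothing to prove, so assume $n\geq1$. Choose an $\mathbb R$-algebra homomorphism $f:Cl^\epsilon_n\to\operatorname{End}_{C^\infty}(\mathfrak X(\mathbb R^N\backslash\{0\}))$ and elements $A_1,\dots,A_n\in\mathfrak s$ with $f(e_i)=\nabla X_{A_i}$.

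The first step is to identify $\nabla X_A$. By Lemma~\ref{l39}, $X_A(x)=(d\varphi_x)_e(A)=\iota(A)x$. So in the coordinates $x_1,\dots,x_N$, the components of $X_A$ are the linear functions $(\iota(A)x)_k$. For the standard flat connection, $\nabla_vX_A$ is the directional derivative of these components, which gives
\[
\nabla_vX_A=\iota(A)v\qquad(v\in T_x(\mathbb R^N\backslash\{0\})\cong\mathbb R^N).
\]
Hence, as an element of $\operatorname{End}_{C^\infty}(\mathfrak X(M))\cong\Gamma(\operatorname{End}(TM))$, the endomorphism $\nabla X_A$ is the constant field of matrices $\iota(A)$. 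This is the only step requiring care: one must check that the tensorial endomorphism $Y\mapsto\nabla_YX_A$ really is pointwise multiplication by $\iota(A)$, using the linearity of $x\mapsto\iota(A)x$.

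Next, apply the Clifford relations in $Cl_n^\epsilon$, namely $e_ie_j+e_je_i=-2\epsilon\delta_{ij}$. Since $f$ is a homomorphism, these give
\[
\iota(A_i)\iota(A_j)+\iota(A_j)\iota(A_i)=-2\epsilon\delta_{ij}I_N .
\]
Consequently, for every $t=(t_1,\dots,t_n)\in\mathbb R^n$,
\[
\iota\Bigl(\sum_{i=1}^n t_iA_i\Bigr)^2=-\epsilon\|t\|^2I_N .
\]
When $t\neq0$ the right-hand side is invertible (whether $\epsilon=1$ or $\epsilon=-1$), so $(d\varphi)_e(\sum t_iA_i)$ is invertible.

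By Lemma~\ref{l311}, it follows that $n\le\rho^{(2)}_{\mathfrak s}(\mathfrak g,\iota)$. Theorem~\ref{t36} identifies $\rho^{(2)}_{\mathfrak s}(\mathfrak g,\iota)$ with $\rho_{G,\mathfrak s}(\mathbb R^N\backslash\{0\},\varphi)$, which yields the asserted inequality for both signs.
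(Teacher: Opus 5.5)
Your argument is correct and is essentially the paper's route: the identification $\nabla X_A=\iota(A)$ is Lemma~\ref{l49}, the transfer of the Clifford relations to matrices is Lemmas~\ref{l410}--\ref{l411}, and the conclusion goes through Lemma~\ref{l311} and Theorem~\ref{t36}. The paper phrases the $-$ case as $\rho^-=\rho^{(1)}_{\mathfrak s}\le\rho^{(2)}_{\mathfrak s}=\rho_{G,\mathfrak s}$ via Theorem~\ref{t44}; your uniform identity $\iota(\sum t_iA_i)^2=-\epsilon\|t\|^2I_N$ also spells out the $+$ case, which the paper leaves implicit.
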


We shall give a proof of Theorem~\ref{t44}. It follows from Lemmas~\ref{l48}, \ref{l411}, and \ref{l412} below.

\begin{Lem}\label{l48}
The following are equivalent.
\begin{description}
\item[(1)] $n \leq \rho^{(1)}_\mathfrak{s}(\mathfrak{g}, \iota)$.
\item[(2)] There exist $A_1,\dots,A_n\in \mathfrak{s}$ such that, for each $i,j\in\{1,\dots,n\}$,
$$
(d\varphi)_e(A_i)(d\varphi)_e(A_j)+(d\varphi)_e(A_j)(d\varphi)_e(A_i)=2\delta_{ij}I_N.
$$
\end{description}
\end{Lem}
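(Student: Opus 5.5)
This is a polarization argument, parallel to Lemma~\ref{l311}. Write $\iota=(d\varphi)_e$ throughout. The key identity is that for $A_1,\dots,A_n\in\mathfrak{s}$ and $t=(t_1,\dots,t_n)\in\mathbb{R}^n$, linearity of $\iota$ gives
\[
\iota\Bigl(\sum_{i=1}^n t_iA_i\Bigr)^2=\sum_{i=1}^n t_i^2\,\iota(A_i)^2+\sum_{i<j}t_it_j\bigl(\iota(A_i)\iota(A_j)+\iota(A_j)\iota(A_i)\bigr).
\]

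For (1)$\Rightarrow$(2), take a linear map $f:\mathbb{R}^n\to\mathfrak{s}$ with $\iota(f(v))^2=\|v\|^2I_N$ for all $v$, and set $A_i:=f(e_i)$, where $e_1,\dots,e_n$ is the standard basis.
\begin{itemize}
\item Taking $v=e_i$ gives $\iota(A_i)^2=I_N$.
\item Taking $v=e_i+e_j$ with $i\neq j$, the identity gives $2I_N=\iota(A_i)^2+\iota(A_j)^2+\iota(A_i)\iota(A_j)+\iota(A_j)\iota(A_i)$. Subtracting $2I_N$ leaves $\iota(A_i)\iota(A_j)+\iota(A_j)\iota(A_i)=0$.
\end{itemize}
Together these are exactly the relations in (2).

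For (2)$\Rightarrow$(1), define $f$ linearly by $f(e_i):=A_i$; this lands in $\mathfrak{s}$ because $\mathfrak{s}$ is a subspace. Substituting the relations of (2) into the identity, every cross term vanishes and every diagonal term equals $t_i^2I_N$. Hence $\iota(f(t))^2=\|t\|^2I_N$ for every $t$, so $n\le\rho^{(1)}_\mathfrak{s}(\mathfrak{g},\iota)$.

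There is no real obstacle here. The only point needing care is to use the $v=e_i$ case before the $v=e_i+e_j$ case, so that the anticommutator can be isolated by subtracting $2I_N$. One should also note that $n=0$ is trivial in both directions.
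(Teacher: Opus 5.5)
Your proposal is correct and takes the same approach as the paper: both expand $\iota\bigl(\sum_i t_iA_i\bigr)^2$ into diagonal terms plus anticommutator cross terms, and your evaluations at $v=e_i$ and $v=e_i+e_j$ spell out what the paper calls ``comparing coefficients.'' Like the paper, you leave implicit that $n\le\rho^{(1)}_{\mathfrak s}(\mathfrak g,\iota)$ yields a map on $\mathbb{R}^n$ (restrict the map defined on $\mathbb{R}^{\rho}$), which is harmless.
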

\begin{proof}
First, we show $(1)\Rightarrow(2)$. Let
$f:\mathbb{R}^n\to \mathfrak{s}$
be an $\mathbb{R}$-linear map such that, for each $v\in\mathbb{R}^n\backslash\{0\}$,
$
(d\varphi)_e(f(v))^2=\|v\|^2I_N.
$
Let $e_1,\dots,e_n$ be the standard basis of $\mathbb{R}^n$, and put
$
A_i\coloneqq f(e_i)\in \mathfrak{s}$, $\tilde{A}_i\coloneqq (d\varphi)_e(A_i)
$
for each $i\in\{1,\dots,n\}$. Then, for each $v=\sum_{i=1}^n v_ie_i\in\mathbb{R}^n$,
$$
(d\varphi)_e(f(v))^2
=
\left(\sum_{i=1}^n v_i\tilde{A}_i\right)^2
=
\sum_{i=1}^n v_i^2\tilde{A}_i^2
+
\sum_{i<j} v_iv_j(\tilde{A}_i\tilde{A}_j+\tilde{A}_j\tilde{A}_i)
=
\|v\|^2I_N.
$$
Comparing coefficients, we obtain
$
\tilde{A}_i\tilde{A}_j+\tilde{A}_j\tilde{A}_i=2\delta_{ij}I_N.
$

Next, we show $(2)\Rightarrow(1)$. Suppose that $A_1,\dots,A_n\in \mathfrak{s}$ satisfy
$$
(d\varphi)_e(A_i)(d\varphi)_e(A_j)+(d\varphi)_e(A_j)(d\varphi)_e(A_i)=2\delta_{ij}I_N
$$
for each $i,j\in\{1,\dots,n\}$. We define an $\mathbb{R}$-linear map
$f:\mathbb{R}^n\to \mathfrak{s}$
by
$
f(e_i)\coloneqq A_i
$
for each $i\in\{1,\dots,n\}$. Then, for each $v=\sum_{i=1}^n v_ie_i\in\mathbb{R}^n$,
$$
(d\varphi)_e(f(v))^2
=
\sum_{i=1}^n v_i^2\tilde{A}_i^2
+
\sum_{i<j} v_iv_j(\tilde{A}_i\tilde{A}_j+\tilde{A}_j\tilde{A}_i)
=
\|v\|^2I_N.
$$
Hence,
$
n\leq \rho^{(1)}_\mathfrak{s}(\mathfrak{g},\iota).
$
\end{proof}

\begin{Lem}\label{l49}
Let $A\in\mathfrak{g}$, and put
$
\tilde{A}\coloneqq (d\varphi)_e(A)=
\begin{pmatrix}
\tilde{a}_{11}&\cdots&\tilde{a}_{1N}\\
\vdots&\ddots&\vdots\\
\tilde{a}_{N1}&\cdots&\tilde{a}_{NN}
\end{pmatrix}
\in\mathfrak{gl}(N,\mathbb{R}).
$
Then, for each $l\in\{1,\dots,N\}$,
$$
\nabla_{\frac{\partial}{\partial x_l}}X_A
=
\sum_{k=1}^N\tilde{a}_{kl}\frac{\partial}{\partial x_k}.
$$
\end{Lem}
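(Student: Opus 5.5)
The plan is to compute the fundamental vector field $X_A$ explicitly in the standard coordinates $x_1,\dots,x_N$ of $\mathbb{R}^N\backslash\{0\}$. Then I will apply the definition of the flat connection $\nabla$ directly.

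First, by the definition of the fundamental vector field and Lemma~\ref{l39}, for each $x\in\mathbb{R}^N\backslash\{0\}$ we have
\[
X_A(x)=(d\varphi_x)_e(A)=(d\varphi)_e(A)x=\tilde{A}x .
\]
Writing $x=(x_1,\dots,x_N)^{\top}$ and using the canonical identification $T_x(\mathbb{R}^N\backslash\{0\})\cong\mathbb{R}^N$ via the frame $\frac{\partial}{\partial x_1},\dots,\frac{\partial}{\partial x_N}$, this gives the global coordinate expression
\[
X_A=\sum_{k=1}^N\Bigl(\sum_{j=1}^N\tilde{a}_{kj}x_j\Bigr)\frac{\partial}{\partial x_k}.
\]
So the component functions of $X_A$ are $f_k=\sum_{j}\tilde{a}_{kj}x_j$, which are linear functions with constant coefficients.

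Second, I will apply the definition of $\nabla$ with $X=\frac{\partial}{\partial x_l}$. This gives
\[
\nabla_{\frac{\partial}{\partial x_l}}X_A=\sum_{k=1}^N\frac{\partial f_k}{\partial x_l}\frac{\partial}{\partial x_k}.
\]
Since $\frac{\partial}{\partial x_l}\sum_j\tilde{a}_{kj}x_j=\tilde{a}_{kl}$, the claimed formula follows.

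There is no real obstacle here. The only point needing care is the first step: the vector $(d\varphi)_e(A)x\in\mathbb{R}^N$ must be correctly identified with the tangent vector $\sum_k(\tilde{A}x)_k\frac{\partial}{\partial x_k}\big|_x$. This identification is exactly the one used implicitly in the proof of Lemma~\ref{l39}, so I will simply cite it.
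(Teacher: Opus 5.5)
Your proposal is correct and matches the paper's proof: the paper likewise uses Lemma~\ref{l39} to get $X_A(p)=\tilde{A}p$ and reads off the component functions $f_k^A=\sum_{m}\tilde{a}_{km}x_m$. It then differentiates these in $x_l$ using the definition of the flat connection $\nabla$, exactly as you do.
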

\begin{proof}
Put
$
X_A=\sum_{k=1}^N f_k^A\frac{\partial}{\partial x_k}.
$
By Lemma~\ref{l39}, for each
$
p=
\begin{pmatrix}
p_1\\
\vdots\\
p_N
\end{pmatrix}
\in\mathbb{R}^N\backslash\{0\},
$
we have
\begin{eqnarray*}
X_A(p) = \tilde{A}p
= \begin{pmatrix}
\sum_{m=1}^N\tilde{a}_{1m}p_m\\
\vdots\\
\sum_{m=1}^N\tilde{a}_{Nm}p_m
\end{pmatrix}
= \sum_{k=1}^N f_k^A(p)(\frac{\partial}{\partial x_k})_p.
\end{eqnarray*}
Hence, for each $k\in\{1,\dots,N\}$,
$
f_k^A(p)=\sum_{m=1}^N\tilde{a}_{km}p_m.
$
Therefore, for each $l\in\{1,\dots,N\}$,
\begin{eqnarray*}
(\nabla_{\frac{\partial}{\partial x_l}}X_A)(p)
&=&
(
\sum_{k=1}^N
\frac{\partial f_k^A}{\partial x_l}
\frac{\partial}{\partial x_k}
)(p)
=
\sum_{k=1}^N
\frac{\partial(\sum_{m=1}^N\tilde{a}_{km}x_m)}{\partial x_l}(p)
(\frac{\partial}{\partial x_k})_p\\
&=&
\sum_{k=1}^N
\tilde{a}_{kl}
(\frac{\partial}{\partial x_k})_p.
\end{eqnarray*}
Thus, for each $l\in\{1,\dots,N\}$,
$$
\nabla_{\frac{\partial}{\partial x_l}}X_A
=
\sum_{k=1}^N\tilde{a}_{kl}\frac{\partial}{\partial x_k}.
$$
\end{proof}

\begin{Lem}\label{l410}
    Let $A_i$, $A_j \in \mathfrak{g}$, and put
$\tilde{A_i} = \begin{pmatrix}
   \tilde{a}_{11}^i & \cdots &\tilde{a}_{1N}^i\\
   \vdots & \ddots & \vdots\\
   \tilde{a}_{N1}^i & \cdots & \tilde{a}_{NN}^i
\end{pmatrix} \coloneqq (d\varphi)_e(A_i)$,
and
$ \tilde{A_j} = \begin{pmatrix}
   \tilde{a}_{11}^j & \cdots &\tilde{a}_{1N}^j\\
   \vdots & \ddots & \vdots\\
   \tilde{a}_{N1}^j & \cdots & \tilde{a}_{NN}^j
\end{pmatrix} \coloneqq (d\varphi)_e(A_j) \in \mathfrak{gl}(N, \mathbb{R})$.
Then, for each $l \in \{ 1,\dots,N \}$,
$$
(\nabla X_{A_i}\circ \nabla X_{A_j} + \nabla X_{A_j}\circ \nabla X_{A_i})(\frac{\partial}{\partial x_l}) = \sum_{k=1}^N\sum_{m=1}^N(\tilde{a}_{mk}^i\tilde{a}_{kl}^j + \tilde{a}_{mk}^j\tilde{a}_{kl}^i)\frac{\partial}{\partial x_m}.
$$
\end{Lem}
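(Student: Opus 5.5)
The plan is to reduce everything to Lemma~\ref{l49}, using the $C^\infty(\mathbb{R}^N\backslash\{0\})$-linearity of $\nabla X_A$ in its argument together with the composition convention fixed in the proof of the functoriality proposition, namely $(\nabla X_{A_i}\circ\nabla X_{A_j})(v)=\nabla_{\nabla_v X_{A_j}}X_{A_i}$. Since both sides of the claimed identity are evaluated on a coordinate field $\frac{\partial}{\partial x_l}$, no general tensorial argument is needed; we only need to apply Lemma~\ref{l49} twice and expand.

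First, by Lemma~\ref{l49} applied to $A_j$, we have $\nabla X_{A_j}(\frac{\partial}{\partial x_l})=\nabla_{\frac{\partial}{\partial x_l}}X_{A_j}=\sum_{k=1}^N\tilde a^j_{kl}\frac{\partial}{\partial x_k}$. Next, we apply $\nabla X_{A_i}$ to this vector field. The map $v\mapsto\nabla_vX_{A_i}$ is $C^\infty$-linear, and here the coefficients $\tilde a^j_{kl}$ are even constants, so
\[
(\nabla X_{A_i}\circ\nabla X_{A_j})\Bigl(\frac{\partial}{\partial x_l}\Bigr)=\sum_{k=1}^N\tilde a^j_{kl}\,\nabla_{\frac{\partial}{\partial x_k}}X_{A_i}=\sum_{k=1}^N\sum_{m=1}^N\tilde a^i_{mk}\tilde a^j_{kl}\frac{\partial}{\partial x_m},
\]
where the second equality is Lemma~\ref{l49} applied again, this time to $A_i$ with index $k$. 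Exchanging the roles of $i$ and $j$ gives the symmetric term $\sum_{k,m}\tilde a^j_{mk}\tilde a^i_{kl}\frac{\partial}{\partial x_m}$. Adding the two expressions yields the stated formula.

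No step here is a genuine obstacle. The only point requiring care is the bookkeeping of indices, in particular checking that the composition order produces $\tilde a^i_{mk}\tilde a^j_{kl}$, i.e.\ the $(m,l)$ entry of $\tilde A_i\tilde A_j$, rather than the transposed product. That entry is precisely what will later allow the anticommutator to be identified with the matrix $\tilde A_i\tilde A_j+\tilde A_j\tilde A_i$ when comparing with Lemma~\ref{l48}.
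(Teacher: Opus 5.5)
Your proposal is correct and follows essentially the same route as the paper: apply Lemma~\ref{l49} to $A_j$, use linearity of $\nabla X_{A_i}$ in the direction argument (the coefficients $\tilde a^j_{kl}$ being constants), apply Lemma~\ref{l49} again to $A_i$, and symmetrize in $i,j$. Your index bookkeeping, which gives the $(m,l)$ entry of $\tilde A_i\tilde A_j$, matches the paper's computation.
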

\begin{proof}
For each $l \in \{ 1,\dots,N \}$, by Lemma~\ref{l49},
    \begin{eqnarray*}
       (\nabla X_{A_i}\circ \nabla X_{A_j} + \nabla X_{A_j}\circ \nabla X_{A_i})(\frac{\partial}{\partial x_l})
       &=&(\nabla X_{A_i})(\sum_{k=1}^N\tilde{a}_{kl}^j\frac{\partial}{\partial x_k}) + (\nabla X_{A_j})(\sum_{k=1}^N\tilde{a}_{kl}^i\frac{\partial}{\partial x_k})\\
       &=&\sum_{k=1}^N\tilde{a}_{kl}^j\nabla_{\frac{\partial}{\partial x_k}} X_{A_i} + \sum_{k=1}^N\tilde{a}_{kl}^i\nabla_{\frac{\partial}{\partial x_k}} X_{A_j}\\
       &=&\sum_{k=1}^N\tilde{a}_{kl}^j\sum_{m=1}^N\tilde{a}_{mk}^i\frac{\partial}{\partial x_m} + \sum_{k=1}^N\tilde{a}_{kl}^i\sum_{m=1}^N\tilde{a}_{mk}^j\frac{\partial}{\partial x_m}\\
       &=& \sum_{k=1}^N\sum_{m=1}^N(\tilde{a}_{mk}^i\tilde{a}_{kl}^j + \tilde{a}_{mk}^j\tilde{a}_{kl}^i)\frac{\partial}{\partial x_m}.
    \end{eqnarray*}
\end{proof}

\begin{Lem}\label{l411}
    For $A_1,\dots,A_n \in \mathfrak{g}$, the following are equivalent.
    \begin{description}
\item[(1)]For each $i, j\in \{1,\dots,n\}$,
$$
(d\varphi)_e(A_i)(d\varphi)_e(A_j)+ (d\varphi)_e(A_j)(d\varphi)_e(A_i) = 2\delta_{ij}I_N.
$$
\item[(2)]For each $i, j \in \{1,\dots,n\}$,
$$
\nabla X_{A_i}\circ \nabla X_{A_j} + \nabla X_{A_j}\circ \nabla X_{A_i} = 2\delta_{ij}\operatorname{id}_{\mathfrak{X}(\mathbb{R}^N\backslash\{0\})}.
$$
 \end{description}
\end{Lem}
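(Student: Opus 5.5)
The plan is to reduce everything to the coordinate frame $\frac{\partial}{\partial x_1},\dots,\frac{\partial}{\partial x_N}$ of $\mathfrak{X}(\mathbb{R}^N\backslash\{0\})$ and then read off the matrix identity via Lemma~\ref{l410}. First I note that $\nabla X_{A_i}\circ\nabla X_{A_j}+\nabla X_{A_j}\circ\nabla X_{A_i}$ and $2\delta_{ij}\operatorname{id}$ are both $C^\infty(\mathbb{R}^N\backslash\{0\})$-linear endomorphisms of $\mathfrak{X}(\mathbb{R}^N\backslash\{0\})$. Here $\nabla X_A$ is the map $Y\mapsto\nabla_Y X_A$, which is $C^\infty$-linear in $Y$ by the axioms of an affine connection, and compositions and sums of such maps remain $C^\infty$-linear. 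Since every vector field is written globally as $\sum_l f_l\frac{\partial}{\partial x_l}$, two such endomorphisms agree if and only if they agree on each $\frac{\partial}{\partial x_l}$, for $l\in\{1,\dots,N\}$.

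Next I evaluate both sides on $\frac{\partial}{\partial x_l}$. By Lemma~\ref{l410}, the left-hand side gives
$$\sum_{m=1}^N\Bigl(\sum_{k=1}^N(\tilde a^i_{mk}\tilde a^j_{kl}+\tilde a^j_{mk}\tilde a^i_{kl})\Bigr)\frac{\partial}{\partial x_m}.$$
The inner coefficient is precisely the $(m,l)$ entry of $\tilde A_i\tilde A_j+\tilde A_j\tilde A_i$, where $\tilde A_i=(d\varphi)_e(A_i)$. The right-hand side gives $2\delta_{ij}\frac{\partial}{\partial x_l}=\sum_m 2\delta_{ij}\delta_{ml}\frac{\partial}{\partial x_m}$, whose coefficients are the $(m,l)$ entries of $2\delta_{ij}I_N$.

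Both implications now follow because the coefficients are constant. The $\frac{\partial}{\partial x_m}$ form a frame at every point, so equality of vector fields is equivalent to equality of all the constant coefficients. This in turn is equivalent, for all $m$ and $l$, to $(\tilde A_i\tilde A_j+\tilde A_j\tilde A_i)_{ml}=2\delta_{ij}\delta_{ml}$, that is, to condition (1).

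I expect no real obstacle. The only points needing care are the reduction to the coordinate frame via $C^\infty$-linearity, and the index bookkeeping that matches the double sum in Lemma~\ref{l410} with matrix multiplication in the correct order, namely $(\tilde A_i\tilde A_j)_{ml}=\sum_k\tilde a^i_{mk}\tilde a^j_{kl}$.
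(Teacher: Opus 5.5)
Your proposal is correct and is essentially the paper's proof: evaluate both sides on each $\frac{\partial}{\partial x_l}$ via Lemma~\ref{l410}, recognize the coefficients as the $(m,l)$ entries of $\tilde A_i\tilde A_j+\tilde A_j\tilde A_i$, and compare them with $2\delta_{ij}I_N$ in both directions. You also state explicitly that $C^\infty$-linearity reduces equality of the endomorphisms to equality on the coordinate frame, a step the paper leaves implicit.
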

\begin{proof}
    First, we show (1)$\Rightarrow$(2). Suppose that $A_1,\dots,A_n$ satisfy
$$
(d\varphi)_e(A_i)(d\varphi)_e(A_j)+ (d\varphi)_e(A_j)(d\varphi)_e(A_i) = 2\delta_{ij}I_N
$$
for each $i, j\in \{1,\dots,n\}$. For each $i \in \{1,\dots,n\}$, put
$\tilde{A_i} = \begin{pmatrix}
   \tilde{a}_{11}^i & \cdots &\tilde{a}_{1N}^i\\
   \vdots & \ddots & \vdots\\
   \tilde{a}_{N1}^i & \cdots & \tilde{a}_{NN}^i
\end{pmatrix} \coloneqq (d\varphi)_e(A_i)\in \mathfrak{gl}(N, \mathbb{R})$.
Then, by Lemma~\ref{l410}, for each $i,j\in\{1,\dots,n\}$ and $l\in\{1,\dots,N\}$,
$$
(\nabla X_{A_i}\circ \nabla X_{A_j} + \nabla X_{A_j}\circ \nabla X_{A_i})(\frac{\partial}{\partial x_l}) = \sum_{k=1}^N\sum_{m=1}^N(\tilde{a}_{mk}^i\tilde{a}_{kl}^j + \tilde{a}_{mk}^j\tilde{a}_{kl}^i)\frac{\partial}{\partial x_m}.
$$
Moreover,
$$
\tilde{A}_i\tilde{A}_j + \tilde{A}_j\tilde{A}_i
= \begin{pmatrix}
   \sum_{k=1}^N(\tilde{a}_{1k}^i\tilde{a}_{k1}^j + \tilde{a}_{1k}^j\tilde{a}_{k1}^i) & \cdots &\sum_{k=1}^N(\tilde{a}_{1k}^i\tilde{a}_{kN}^j + \tilde{a}_{1k}^j\tilde{a}_{kN}^i)\\
   \vdots & \ddots & \vdots\\
   \sum_{k=1}^N(\tilde{a}_{Nk}^i\tilde{a}_{k1}^j + \tilde{a}_{Nk}^j\tilde{a}_{k1}^i) & \cdots & \sum_{k=1}^N(\tilde{a}_{Nk}^i\tilde{a}_{kN}^j + \tilde{a}_{Nk}^j\tilde{a}_{kN}^i)
\end{pmatrix}
= 2\delta_{ij}I_N.
$$
Therefore,
\begin{eqnarray*}
    \sum_{k=1}^N\sum_{m=1}^N(\tilde{a}_{mk}^i\tilde{a}_{kl}^j + \tilde{a}_{mk}^j\tilde{a}_{kl}^i)\frac{\partial}{\partial x_m}
    &=& \sum_{m=1}^N\sum_{k=1}^N(\tilde{a}_{mk}^i\tilde{a}_{kl}^j + \tilde{a}_{mk}^j\tilde{a}_{kl}^i)\frac{\partial}{\partial x_m}\\
    &=&\sum_{k=1}^N(\tilde{a}_{lk}^i\tilde{a}_{kl}^j + \tilde{a}_{lk}^j\tilde{a}_{kl}^i)\frac{\partial}{\partial x_l}\\
    &=& 2\delta_{ij}\frac{\partial}{\partial x_l}.
\end{eqnarray*}
Hence, for each $l \in \{1,\dots,N\}$,
$$
(\nabla X_{A_i}\circ \nabla X_{A_j} + \nabla X_{A_j}\circ \nabla X_{A_i})(\frac{\partial}{\partial x_l}) = 2\delta_{ij}\frac{\partial}{\partial x_l}
$$
holds, and therefore
$$
\nabla X_{A_i}\circ \nabla X_{A_j} + \nabla X_{A_j}\circ \nabla X_{A_i} = 2\delta_{ij}\operatorname{id}_{\mathfrak{X}(\mathbb{R}^N\backslash\{0\})}.
$$

    Next, we show (2)$\Rightarrow$(1). For each $i, j \in \{1,\dots,n\}$ and  $l \in \{1,\dots,N\}$,
$$
(\nabla X_{A_i}\circ \nabla X_{A_j} + \nabla X_{A_j}\circ \nabla X_{A_i})(\frac{\partial}{\partial x_l}) = \sum_{k=1}^N\sum_{m=1}^N(\tilde{a}_{mk}^i\tilde{a}_{kl}^j + \tilde{a}_{mk}^j\tilde{a}_{kl}^i)\frac{\partial}{\partial x_m} =2\delta_{ij}\frac{\partial}{\partial x_l},
$$
and hence
$$
\tilde{A}_i\tilde{A}_j + \tilde{A}_j\tilde{A}_i
=  2\delta_{ij}I_N.
$$
\end{proof}

\begin{Lem}\label{l412}
    Let $n \in \mathbb{N}$. The following are equivalent.
    \begin{description}
\item[(1)]$n \leq \rho^{-}_{G, \mathfrak{s}}(\mathbb{R}^N\backslash\{0\}, \sigma, \nabla)$.
\item[(2)]There exist $A_1,\dots,A_n \in \mathfrak{s}$ such that for each $i, j \in \{1,\dots,n\}$,
$$
\nabla X_{A_i}\circ \nabla X_{A_j} + \nabla X_{A_j}\circ \nabla X_{A_i} = 2\delta_{ij}\operatorname{id}_{\mathfrak{X}(\mathbb{R}^N\backslash\{0\})}.
$$
 \end{description}
\end{Lem}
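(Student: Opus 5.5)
The equivalence is essentially an unwinding of the definition of $\rho^{-}_{G,\mathfrak{s}}$ via the universal property of the Clifford algebra $Cl_n^- = Cl_{0,n}$. I take $Cl_{0,n}$ to be generated by $e_1,\dots,e_n$ subject to $e_ie_j+e_je_i=2\delta_{ij}$. This sign convention must be the one in force, since the paper's functoriality proof gives $-\epsilon\,2\delta_{ij}$ with $\epsilon=-1$. I also read condition (1) as the existence of a homomorphism for this $n$; this is implicit in the way Lemma~\ref{l311} and Lemma~\ref{l48} are phrased.

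For (1)$\Rightarrow$(2), suppose there is an $\mathbb{R}$-algebra homomorphism
\[
f:Cl_{0,n}\to \operatorname{End}_{C^\infty(\mathbb{R}^N\backslash\{0\})}(\mathfrak{X}(\mathbb{R}^N\backslash\{0\}))
\]
with $f(e_i)\in\nabla^{M}\mathfrak{s}$ for each $i$. If only $n\le m$ with $m$ the maximum is given, restrict the homomorphism for $m$ to the subalgebra generated by $e_1,\dots,e_n$; this is a copy of $Cl_{0,n}$.

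For each $i$, choose $A_i\in\mathfrak{s}$ with $f(e_i)=\nabla X_{A_i}$. Applying $f$ to the relation $e_ie_j+e_je_i=2\delta_{ij}\cdot 1$, and using that $f$ is a unital algebra homomorphism (so $f(1)=\operatorname{id}_{\mathfrak{X}(\mathbb{R}^N\backslash\{0\})}$), gives exactly the identity in (2).

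For (2)$\Rightarrow$(1), take $A_1,\dots,A_n$ as in (2). Define $f$ on the free tensor algebra $T(\mathbb{R}^n)$ by $e_i\mapsto\nabla X_{A_i}$. The hypothesis says that the elements $e_i\otimes e_j+e_j\otimes e_i-2\delta_{ij}$ are sent to $0$. These elements generate the ideal defining $Cl_{0,n}$, so $f$ factors through an $\mathbb{R}$-algebra homomorphism $Cl_{0,n}\to\operatorname{End}_{C^\infty}(\mathfrak{X})$. By construction $f(e_i)=\nabla X_{A_i}\in\nabla^M\mathfrak{s}$, and hence $n\le\rho^-_{G,\mathfrak{s}}$.

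The only delicate point is bookkeeping rather than mathematics. One must make sure that the sign convention for $Cl_{0,n}$ matches the $+2\delta_{ij}$ in (2). One must also check that $\operatorname{End}_{C^\infty}$ is taken as an $\mathbb{R}$-algebra under composition with unit $\operatorname{id}$, so that the universal property applies verbatim. Combined with Lemmas~\ref{l48} and~\ref{l411}, this yields Theorem~\ref{t44}.
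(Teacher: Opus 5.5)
Your proof is correct and follows the paper's argument. In (1)$\Rightarrow$(2) you choose $A_i$ with $f(e_i)=\nabla X_{A_i}$ and apply $f$ to the Clifford relations, and in (2)$\Rightarrow$(1) you define $f$ on the generators by $f(e_i)=\nabla X_{A_i}$, exactly as the paper does. Your extra remarks only make explicit what the paper leaves implicit: the sign convention $e_i^2=+1$ for $Cl_{0,n}$, unitality of $f$, the factorization through $T(\mathbb{R}^n)$, and passing from the maximal $m$ down to $n$.
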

\begin{proof}
    First, we show (1)$\Rightarrow$(2). Let $f:Cl_{0,n}  \to \textrm{End}_{C^{\infty}(\mathbb{R}^N\backslash\{0\})}(\mathfrak{X}(\mathbb{R}^N\backslash\{0\}))$ be an $\mathbb{R}$-algebra homomorphism satisfying $f(e_i) \in \nabla^{\mathbb{R}^N\backslash\{0\}}\mathfrak{s}$ for each $i \in \{1,\dots,n\}$. Then for each $i \in \{1,\dots,n\}$, there exists $A_i \in \mathfrak{s}$ such that $f(e_i) = \nabla X_{A_i}$. Hence, for each $i, j \in \{1,\dots,n\}$,
    \begin{eqnarray*}
        \nabla X_{A_i}\circ \nabla X_{A_j} + \nabla X_{A_j}\circ \nabla X_{A_i}
        &=& f(e_i)f(e_j) +f(e_j)f(e_i)
        = 2\delta_{ij}\operatorname{id}_{\mathfrak{X}(\mathbb{R}^N\backslash\{0\})}.
    \end{eqnarray*}

    Next, we show (2)$\Rightarrow$(1). Suppose that there exist $A_1,\dots,A_n \in \mathfrak{s}$ such that for each $i, j \in \{1,\dots,n\}$,
$$
\nabla X_{A_i}\circ \nabla X_{A_j} + \nabla X_{A_j}\circ \nabla X_{A_i} = 2\delta_{ij}\operatorname{id}_{\mathfrak{X}(\mathbb{R}^N\backslash\{0\})}.
$$
Then it suffices to define an $\mathbb{R}$-algebra homomorphism
$f:Cl_{0,n}  \to \textrm{End}_{C^{\infty}(\mathbb{R}^N\backslash\{0\})}(\mathfrak{X}(\mathbb{R}^N\backslash\{0\}))$
by $f(e_i) \coloneqq \nabla X_{A_i} \in \nabla^{\mathbb{R}^N\backslash\{0\}}\mathfrak{s}$ for each $i \in \{1,\dots,n\}$.
\end{proof}

\subsection{Relation to Clifford structures}
\phantomsection
Let $G$ be a Lie group, let $\mathfrak{s}\subset\mathfrak g$ be a vector subspace, let $(M,g)$ be an oriented Riemannian $G$-manifold with group action $\sigma$, and let
$\nabla:\mathfrak X(M)\times\mathfrak X(M)\to\mathfrak X(M)$
be an affine connection on $M$. We write
$\widetilde{\nabla}:\mathfrak X(M)\to \textrm{End}_{C^{\infty}(M)}(\mathfrak{X}(M)) \cong \Gamma(\operatorname{End}(TM))$
for the map induced by $\nabla$.

To relate $\rho^+_{G,\mathfrak{s}}(M,\sigma,\nabla)$ to Clifford structures, we introduce the following integers.
\begin{Def}
    For each $\epsilon\in\{\pm1\}$, we define an integer
$\rho^\epsilon_{G,\mathfrak{s}}(M, g, \sigma,\widetilde{\nabla})$
as the largest $n\in\mathbb N$ for which there exists an $\mathbb R$-algebra homomorphism
$f:Cl_n^\epsilon\to \Gamma(\operatorname{End}(TM))$ 
such that
\begin{itemize}
\item $f(e_i)\in\widetilde{\nabla}^M \mathfrak{s} \cap \Gamma(\textrm{End}^{-}(TM))$ for any $i \in \{1,\dots,n\}$.
\end{itemize}
\end{Def}

    It is clear that the following holds:
    $$
    \rho^{\pm}_{G, \mathfrak{s}}(M, g, \sigma, \widetilde{\nabla})\leq \rho^{\pm}_{G, \mathfrak{s}}(M, \sigma, \nabla).
    $$

The following theorem shows that $\rho^+_{G,\mathfrak{s}}(M,g,\sigma,\widetilde{\nabla})$ is closely related to Clifford structures.
\begin{Thm}\label{t414}
    The following are equivalent.
    \begin{description}
\item[(1)]$n \leq \rho^+_{G, \mathfrak{s}}(M, g, \sigma, \widetilde{\nabla})$.
\item[(2)]There exists a rank $n$ Clifford structure $(E, h, \varphi)$ satisfying the following.
\begin{description}
\item[$\bullet$]$E$ is trivial.
\item[$\bullet$]There exist sections $s_1,\dots,s_n \in \Gamma(E)$ forming an $h$-orthonormal frame such that for each $i\in \{1,\dots,n\}$, $\varphi\circ s_i \in \widetilde{\nabla}^M\mathfrak{s}$.
 \end{description}
 \end{description}
\end{Thm}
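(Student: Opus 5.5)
Both implications go through a trivialization of $E$: build the Clifford structure on the trivial bundle $M\times\mathbb{R}^n$ from $f$, and conversely read $f$ off from an orthonormal frame. Convention: $Cl_n=Cl_n^+$ has relations $e_ie_j+e_je_i=-2\delta_{ij}$, which matches the sign convention in the functoriality proof ($-\epsilon\,2\delta_{ij}$ with $\epsilon=1$). This is the convention for $Cl(E_x,h_x)$, where $v^2=-h(v,v)$.

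\textbf{(1)$\Rightarrow$(2).} Let $f:Cl_n\to\Gamma(\operatorname{End}(TM))$ be an $\mathbb{R}$-algebra homomorphism with $f(e_i)=\widetilde{\nabla}X_{A_i}$ skew-symmetric for some $A_i\in\mathfrak{s}$. Put $E\coloneqq M\times\mathbb{R}^n$ with the constant standard inner product $h$ and the standard orientation. Let $s_i$ be the constant section $x\mapsto(x,e_i)$, so $s_1,\dots,s_n$ is an $h$-orthonormal frame.

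Fibrewise, $Cl(E_x,h_x)\cong Cl_n$. Define $\varphi_x:Cl(E_x,h_x)\to\operatorname{End}(T_xM)$ by evaluating $f$ at $x$, i.e. $\varphi_x(e_i)\coloneqq f(e_i)_x$. This is well defined because $f$ is $C^\infty(M)$-linear on sections, and an endomorphism field $f(a)$ has a pointwise value $f(a)_x$. It is an algebra morphism because $f$ is, and the family $\varphi_x$ is smooth in $x$ since each $f(e_i)$ is a smooth section.

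The remaining conditions are immediate. $\varphi$ maps $E$ into $\operatorname{End}^-(TM)$, because it is linear on each fibre and the $f(e_i)$ are skew. It is non-vanishing, because $\varphi(1)=\operatorname{id}$; this uses that $f$ is unital as an $\mathbb{R}$-algebra homomorphism (if unitality is not assumed, use $\varphi(s_1)^2=-\operatorname{id}\neq0$). Finally, $\varphi\circ s_i=f(e_i)\in\widetilde{\nabla}^M\mathfrak{s}$.

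\textbf{(2)$\Rightarrow$(1).} Given $(E,h,\varphi)$ and the orthonormal frame $s_1,\dots,s_n$ with $\varphi\circ s_i=\widetilde{\nabla}X_{A_i}$, define $f$ on generators by $f(e_i)\coloneqq\varphi\circ s_i\in\Gamma(\operatorname{End}(TM))$. To see that $f$ extends to an algebra homomorphism on $Cl_n$, check the Clifford relations pointwise. In $Cl(E_x,h_x)$ we have
\[
s_i(x)s_j(x)+s_j(x)s_i(x)=-2h_x(s_i(x),s_j(x))=-2\delta_{ij}.
\]
Since $\varphi_x$ is an algebra morphism, this gives
\[
f(e_i)f(e_j)+f(e_j)f(e_i)=-2\delta_{ij}\,\varphi(1).
\]
Then $f(e_i)\in\widetilde{\nabla}^M\mathfrak{s}\cap\Gamma(\operatorname{End}^-(TM))$, because $\varphi$ maps $E$ into skew endomorphisms. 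Hence $n\le\rho^+_{G,\mathfrak{s}}(M,g,\sigma,\widetilde{\nabla})$.

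\textbf{Main obstacle.} The real subtlety is the identity $\varphi(1)=\operatorname{id}_{TM}$, i.e. that the Clifford morphism is unital. If the definition of \cite{AndreiUwe11} only gives a non-vanishing algebra bundle morphism, $\varphi(1)$ is merely an idempotent. I would first adopt the standard convention that algebra bundle morphisms are unital. If that convention is not available, I would argue with idempotents: $\varphi(1)=\varphi(s_1)^2\cdot(-1)$ is a smooth idempotent field, and for the (2)$\Rightarrow$(1) direction one needs $f(1)=\operatorname{id}$, since homomorphisms into $\Gamma(\operatorname{End}(TM))$ are taken unital. So unitality must come from the definition, and I would make this convention explicit. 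Orientation of $E$ causes no difficulty, since $E$ is trivial and the frame determines an orientation.
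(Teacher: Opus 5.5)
Your proposal is correct and follows the paper's proof. For (1)$\Rightarrow$(2) the paper also takes $E=M\times\mathbb{R}^n$ with the standard inner product and orientation, sets $\varphi(x,a)=f(a)(x)$ and uses the constant sections $s_i$; for (2)$\Rightarrow$(1) it defines $f(e_i)=\varphi\circ s_i$, while your pointwise check of the Clifford relations and skew-symmetry, and your explicit unitality convention for $\varphi$, fill in details the paper leaves implicit.
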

\begin{proof}
    First, we show $(1)\Rightarrow (2)$. Suppose that there exists an $\mathbb{R}$-algebra homomorphism $f:Cl_n  \to \Gamma(\textrm{End}(TM))$ such that for each $i \in \{ 1,\dots,n \}$, $f(e_i) \in \widetilde{\nabla}^M\mathfrak{s}\cap \Gamma(\textrm{End}^-(TM))$. Let $E \coloneqq M \times \mathbb{R}^n$, and equip it with the standard inner product $h$ and the standard orientation. Then $Cl(E, h) = M \times Cl_n$. We define
    $$
    \varphi:Cl(E, h) \to \textrm{End}(TM), (x, a) \mapsto f(a)(x).
    $$
    This is an algebra bundle homomorphism.
Furthermore, let $s_1,\dots,s_n \in \Gamma(E)$ be the sections corresponding to the standard basis $e_1, \dots, e_n$ of $\mathbb{R}^n$. Then $s_1,\dots,s_n$ form an $h$-orthonormal frame, and for each $i \in \{1,\dots,n\}$, $\varphi\circ s_i = \varphi(-, e_i) = f(e_i) \in \widetilde{\nabla}^M\mathfrak{s}\cap \Gamma(\textrm{End}^-(TM))$. Hence for any $(x, v) \coloneqq (x, \sum_{i=1}^nt_ie_i) \in E \subset Cl(E, h)$,
 $$
 \varphi(x, v) = \sum_{i=1}^nt_i\varphi(x, e_i) =\sum_{i=1}^nt_if(e_i) \in \textrm{End}^-(TM).
 $$
 Therefore, $(E, h, \varphi)$ is a rank $n$ Clifford structure on $(M, g)$.

 Next, we show $(2) \Rightarrow(1)$. Suppose that there exists a rank $n$ Clifford structure $(E, h, \varphi)$ such that:
\begin{description}
\item[$\bullet$]$E$ is trivial.
\item[$\bullet$]There exist sections $s_1,\dots,s_n \in \Gamma(E)$ forming an $h$-orthonormal frame such that for each $i\in \{1,\dots,n\}$, $\varphi\circ s_i \in \widetilde{\nabla}^M\mathfrak{s}$.
 \end{description}
Then it suffices to define an $\mathbb{R}$-algebra homomorphism $f:Cl_n  \to \Gamma(\textrm{End}(TM))$ by $f(e_i) = \varphi\circ s_i \in \widetilde{\nabla}^M\mathfrak{s}$ for each $i \in \{1,\dots,n\}$.
\end{proof}

\subsection{Proof of Theorem~\ref{t16}}

Let $\sigma$ be an isometric $G$-action on $M$, and let $\nabla^g$ be the Levi-Civita connection on $M$.
\begin{Lem}
     The following holds:
$$
\widetilde{\nabla^g}^{M}\mathfrak g\subset \Gamma(\operatorname{End}^{-}(TM)).
$$
    \end{Lem}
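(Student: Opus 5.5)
The claim reduces to the classical fact that a Killing field $X$ has skew-symmetric covariant derivative $\nabla^g X$ with respect to $g$. Since $\sigma$ is isometric, every fundamental vector field $X_A$ ($A\in\mathfrak g$) is a Killing field. So the plan has two parts: first show that each $X_A$ is Killing, then show that $\nabla^g X$ is skew for any Killing field $X$.

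For the first part, note that the flow of $X_A$ is $\phi_t = \sigma(\exp(-tA),\cdot)$ or $\sigma(\exp(tA),\cdot)$, depending on the sign convention. Either way, each $\phi_t$ is an isometry because $\sigma$ acts by isometries. Hence
\[
\mathcal L_{X_A} g = \frac{d}{dt}\Big|_{t=0}\phi_t^*g = 0 .
\]
One should also confirm that $\sigma(\exp(tA),p)$ integrates $X_A$. This follows from the definition $X_A(p)=(d\sigma_p)_e(A)$ together with the group law:
\[
\frac{d}{dt}\,\sigma(\exp(tA),p) = (d\sigma_{q})_e(A), \qquad q=\sigma(\exp(tA),p).
\]
I expect this bookkeeping to be the only mildly delicate point, since the sign convention does not matter for the conclusion.

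For the second part, use that $\nabla^g$ is torsion-free and metric. For vector fields $Y,Z$,
\[
(\mathcal L_X g)(Y,Z) = X g(Y,Z) - g([X,Y],Z) - g(Y,[X,Z]).
\]
Expand $Xg(Y,Z)=g(\nabla_XY,Z)+g(Y,\nabla_XZ)$ by metric compatibility, and replace $[X,Y]=\nabla_XY-\nabla_YX$ by torsion-freeness. This gives
\[
(\mathcal L_X g)(Y,Z) = g(\nabla_Y X, Z) + g(Y, \nabla_Z X).
\]
Setting $X=X_A$, the left side vanishes, so $g(\widetilde{\nabla^g}X_A(Y),Z) = -g(Y,\widetilde{\nabla^g}X_A(Z))$.

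Both sides are tensorial in $Y$ and $Z$, so the identity holds pointwise for all $v,w\in T_pM$. Therefore $\widetilde{\nabla^g}X_A\in\Gamma(\operatorname{End}^-(TM))$ for every $A\in\mathfrak g$, which is the claimed inclusion.
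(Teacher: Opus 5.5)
Your proof is correct and follows the same route as the paper. Fundamental vector fields of an isometric action are Killing, and for the Levi-Civita connection the Killing condition is exactly $g(\nabla^g_Y X_A, Z) + g(Y, \nabla^g_Z X_A) = 0$, i.e.\ skew-symmetry of $\widetilde{\nabla^g}X_A$. The paper simply asserts both facts, whereas you verify them via the flow computation and the Lie-derivative expansion; this adds detail but is not a different argument.
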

    \begin{proof}
        If $\sigma$ is isometric, then every fundamental vector field $X \in \mathfrak{X}(M)$ is a Killing vector field. Since $\nabla^g$ is the Levi-Civita connection, for all $Y, Z \in \mathfrak{X}(M)$,
  $$
  g(\nabla^g_YX, Z) + g(Y, \nabla^g_ZX) =0.
  $$
  Therefore, $\widetilde{\nabla^g}^M\mathfrak{g}\subset \Gamma(\textrm{End}^-(TM))$.
    \end{proof}
    
\begin{Cor}\label{c416}
        The following holds:
         $$
    \rho^+_{G, \mathfrak{s}}(M, g, \sigma, \widetilde{\nabla^g})= \rho^{+}_{G, \mathfrak{s}}(M, \sigma, \nabla^g).
    $$
    \end{Cor}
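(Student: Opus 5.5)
The equality is a pair of inequalities. One direction is already recorded: just after the definition of $\rho^\epsilon_{G,\mathfrak{s}}(M,g,\sigma,\widetilde{\nabla})$ it is noted that
\[
\rho^{+}_{G,\mathfrak{s}}(M,g,\sigma,\widetilde{\nabla^g})\leq \rho^{+}_{G,\mathfrak{s}}(M,\sigma,\nabla^g).
\]
This holds because any admissible homomorphism for the left-hand side is, via the identification $\operatorname{End}_{C^\infty(M)}(\mathfrak X(M))\cong\Gamma(\operatorname{End}(TM))$, also admissible for the right-hand side.

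For the reverse inequality I will put $n\coloneqq\rho^{+}_{G,\mathfrak{s}}(M,\sigma,\nabla^g)$ and take an $\mathbb R$-algebra homomorphism $f:Cl_n\to\operatorname{End}_{C^\infty(M)}(\mathfrak X(M))$ with $f(e_i)=\nabla^g X_{A_i}$ for some $A_i\in\mathfrak{s}$. Transporting $f$ through the identification gives a homomorphism into $\Gamma(\operatorname{End}(TM))$ with $f(e_i)\in\widetilde{\nabla^g}^M\mathfrak{s}$. It remains to show that each $f(e_i)$ is skew-symmetric. Since $\mathfrak{s}\subset\mathfrak g$, the preceding Lemma gives
\[
\widetilde{\nabla^g}^M\mathfrak{s}\subset\widetilde{\nabla^g}^M\mathfrak g\subset\Gamma(\operatorname{End}^-(TM)).
\]
Hence $f(e_i)\in\widetilde{\nabla^g}^M\mathfrak{s}\cap\Gamma(\operatorname{End}^-(TM))$ for every $i$, so $f$ is admissible in the definition of $\rho^{+}_{G,\mathfrak{s}}(M,g,\sigma,\widetilde{\nabla^g})$, and therefore $n\le\rho^{+}_{G,\mathfrak{s}}(M,g,\sigma,\widetilde{\nabla^g})$.

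The only point needing any care is that the identification of $C^\infty(M)$-linear endomorphisms of $\mathfrak X(M)$ with sections of $\operatorname{End}(TM)$ is an $\mathbb R$-algebra isomorphism. This is standard tensoriality, so composing $f$ with it still yields an algebra homomorphism, and it sends $\nabla^g X_A$ to $\widetilde{\nabla^g}X_A$. The skew-symmetry input, namely that fundamental vector fields of an isometric action are Killing, is supplied entirely by the preceding Lemma. I therefore expect no real obstacle: the corollary is a direct containment argument.
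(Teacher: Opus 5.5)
Your proposal is correct and is essentially the paper's argument. The paper also obtains the corollary from the preceding Lemma $\widetilde{\nabla^g}^M\mathfrak g\subset \Gamma(\operatorname{End}^{-}(TM))$ (fundamental vector fields of an isometric action are Killing), which makes the skew-symmetry condition in the definition of $\rho^+_{G,\mathfrak{s}}(M,g,\sigma,\widetilde{\nabla^g})$ automatic, together with the trivial inequality noted after that definition; it would be slightly cleaner to say the two sets of admissible $n$ coincide rather than set $n\coloneqq\rho^+_{G,\mathfrak{s}}(M,\sigma,\nabla^g)$, which tacitly assumes the maximum exists, but this does not affect the argument.
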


The following follows from Theorem~\ref{t414} and Corollary~\ref{c416}.
\begin{Cor}[Theorem~\ref{t16}]
    The following are equivalent.
    \begin{description}
\item[(1)]$n \leq \rho^+_{G, \mathfrak{s}}(M, \sigma, \nabla^g)$.
\item[(2)]There exists a rank $n$ Clifford structure $(E, h, \varphi)$ satisfying the following.
\begin{description}
\item[$\bullet$]$E$ is trivial.
\item[$\bullet$]There exist sections $s_1,\dots,s_n \in \Gamma(E)$ forming an $h$-orthonormal frame such that for each $i\in \{1,\dots,n\}$, $\varphi\circ s_i \in \widetilde{\nabla^g}^M\mathfrak{s}$.
 \end{description}
 \end{description}
\end{Cor}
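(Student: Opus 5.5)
The plan is to deduce the equivalence by chaining Theorem~\ref{t414} with Corollary~\ref{c416}, taking $\nabla=\nabla^g$ throughout. The only input specific to the Levi--Civita connection is the preceding lemma: since $\sigma$ is isometric, every fundamental vector field $X_A$ is Killing. The Killing equation $g(\nabla^g_YX_A,Z)+g(Y,\nabla^g_ZX_A)=0$ then says that $\widetilde{\nabla^g}X_A$ is a skew-symmetric endomorphism. Hence
\[
\widetilde{\nabla^g}^M\mathfrak{s}\subset\widetilde{\nabla^g}^M\mathfrak g\subset\Gamma(\operatorname{End}^-(TM)).
\]

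Step one is Corollary~\ref{c416}. It states that the skew-symmetry constraint in the definition of $\rho^+_{G,\mathfrak s}(M,g,\sigma,\widetilde{\nabla^g})$ is automatic. The intersection $\widetilde{\nabla^g}^M\mathfrak s\cap\Gamma(\operatorname{End}^-(TM))$ is all of $\widetilde{\nabla^g}^M\mathfrak s$, so the admissible algebra homomorphisms $f:Cl_n\to\Gamma(\operatorname{End}(TM))$ for the two numbers coincide. One must also check that $\Gamma(\operatorname{End}(TM))$ and $\operatorname{End}_{C^\infty(M)}(\mathfrak X(M))$ are identified as $\mathbb R$-algebras, which is the standard isomorphism already used in the paper. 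With this, condition (1) of the statement is equivalent to $n\le\rho^+_{G,\mathfrak s}(M,g,\sigma,\widetilde{\nabla^g})$.

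Step two is to apply Theorem~\ref{t414} with $\nabla=\nabla^g$. This turns the last inequality into the existence of a trivial rank $n$ Clifford structure $(E,h,\varphi)$ with an $h$-orthonormal frame $s_1,\dots,s_n$ satisfying $\varphi\circ s_i\in\widetilde{\nabla^g}^M\mathfrak s$, which is exactly condition (2).

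The main obstacle is the direction (2)$\Rightarrow$(1) inside Theorem~\ref{t414}. There one must verify that $e_i\mapsto\varphi\circ s_i$ really extends to an algebra homomorphism $Cl_n\to\Gamma(\operatorname{End}(TM))$. This follows because $\varphi$ is fiberwise a Clifford morphism. At each $x$ we have $\varphi(s_i(x))\varphi(s_j(x))+\varphi(s_j(x))\varphi(s_i(x))=-2h(s_i,s_j)\,\mathrm{id}=-2\delta_{ij}\,\mathrm{id}$, using the relation of $Cl(E_x,h_x)$ and the orthonormality of the frame. The universal property of $Cl_n$ then produces $f$.

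For (1)$\Rightarrow$(2), one takes $E=M\times\mathbb R^n$ with the standard metric and orientation. $\varphi$ is then obtained by evaluating $f$ pointwise, and the resulting morphism is non-vanishing since $\varphi(1)=\mathrm{id}$ when $f$ is unital. That $\varphi$ maps $E$ into $\operatorname{End}^-(TM)$ is exactly the skew-symmetry supplied by the Killing lemma.
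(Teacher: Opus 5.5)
Your proposal is correct and follows the paper's argument. The Killing-field lemma gives Corollary~\ref{c416}, which makes the skew-symmetry constraint automatic, and Theorem~\ref{t414} with $\nabla=\nabla^g$ then gives the equivalence. Your extra checks spell out details the paper leaves implicit in the proof of Theorem~\ref{t414}: the Clifford relation $\varphi(s_i)\varphi(s_j)+\varphi(s_j)\varphi(s_i)=-2\delta_{ij}\,\mathrm{id}$ producing $f$ via the universal property, and non-vanishing from $\varphi(1)=\mathrm{id}$.
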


\section*{Acknowledgements}
\phantomsection
\addcontentsline{toc}{section}{Acknowledgements}
The author is grateful to Takayuki Okuda, Kazuki Kannaka, and Koichi Tojo for their guidance, encouragement, and work that inspired this paper.
The author is also indebted to Yoshio Agaoka, Akira Kubo, Hiroshi Tamaru, Hiroaki Nagaya, and Ryoya Kai for many helpful comments.
This work was supported by JST SPRING, Grant Number JPMJSP2132.

\addcontentsline{toc}{section}{References}
\phantomsection

\providecommand{\bysame}{\leavevmode\hbox to3em{\hrulefill}\thinspace}
\providecommand{\MR}{\relax\ifhmode\unskip\space\fi MR }
\providecommand{\MRhref}[2]{%
  \href{http://www.ams.org/mathscinet-getitem?mr=#1}{#2}
}

\end{document}